\documentclass[a4paper]{gtart}

\usepackage[inner=20mm, outer=20mm, textheight=245mm]{geometry}

\usepackage{psfrag, graphicx, epsfig,color}

\usepackage{amsmath, amssymb, latexsym, euscript, amsthm}

\usepackage{thmtools}

\usepackage[colorlinks,citecolor=blue,linkcolor=blue, backref=page]{hyperref}
\usepackage[nameinlink]{cleveref}

\usepackage[margin=1cm]{caption}

\usepackage{tikz}
\usetikzlibrary{matrix}

\usepackage{mathptmx, wrapfig}

\newcommand{\cT}{\mathcal{T}}

\usepackage{algorithmicx}
\usepackage{algpseudocode}

\usepackage{longtable}

\usepackage[normalem]{ulem}

\usepackage[colorinlistoftodos]{todonotes}
\theoremstyle{plain}
\newtheorem{theorem}{Theorem}%[section]
\newtheorem*{theorem*}{Theorem}

\newtheorem{proposition}[theorem]{Proposition}
\newtheorem{corollary}[theorem]{Corollary}
\newtheorem{scholion}[theorem]{Scholion}

\theoremstyle{definition}

\newtheorem*{definition*}{Definition}

\newtheorem{question}[theorem]{Question}

\theoremstyle{remark}

\numberwithin{equation}{section}

\begin{document}

\title{Moving between 3-manifold triangulations is NP-hard}
\author{Stephan Tillmann and Anastasiia Tsvietkova}

\begin{abstract} 
We show that \textsc{number of bistellar moves and sparse degree-two edge collapses for 3-sphere} is NP-hard. It follows that a similar problem for an arbitrary 3-manifold is NP-hard as well. This is the first NP-hardness result concerning moves between two triangulations of a 3-manifold.
\end{abstract}

%MSC2020
%57K31 Invariants of 3-manifolds (also skein modules; character varieties)
%57M15 - Relations of low-dimensional topology with graph theory
%57M50 - General geometric structures on low-dimensional manifolds
%57Q15 - Triangulating manifolds
%68Q25 - Analysis of algorithms and problem complexity
%68W40 - Analysis of algorithms

\primaryclass{57K31, 57M15, 57M50, 57Q15, 68Q25, 68W40}
\keywords{3--manifold, decision problem, NP--hard, algorithmic complexity, triangulation, bistellar move, Pachner move} 
\makeshorttitle

%%%%%%%%%%%%%%%%%%%%%%%%%%%
%%%%%%%%%%%%%%%%%%%%%%%%%%%

\section{Introduction}
\label{sec:intro}

\subsection{Main result.}
One of the key facts in 3-dimensional topology is that any two triangulations of a 3-manifold can be connected by a finite sequence of certain moves. Out of them, the finite set of bistellar moves, introduced by Pachner \cite{Pachner-bistellare-1978} plays a central role and has been used to study invariants of and structures on 3-manifolds. In modern algorithms, crushing and edge collapses are often additionally used, since they can be more efficient in practice. For instance, they are implemented in the computational topology software {\tt Regina}\rm~\cite{regina}. The set of edge collapses is infinite. We restrict our attention to a particular type of edge collapse that we call a \emph{sparse degree-two edge collapse} and consider the following decision problem. 

\noindent \textsc{number of bistellar moves and sparse degree-two edge collapses for closed 3-manifold.} Given a natural number $k$ and two topological triangulations $\cT_1, \cT_2$ of a 3-manifold, can one get from $\cT_1$ to $\cT_2$ using at most $k$ bistellar moves, sparse degree-two edge collapses, and their inverses?

We define a new class of triangulations of 3--sphere associated with embeddings of planar graphs (\Cref{subsec:construction}). The associated graph has a Hamiltonian path if and only if the triangulation can be simplified with a given number of certain type of edge collapses. We use this fact to perform Karp reduction from a verison of \textsc{hamiltonian path} problem, known to be NP-hard, to a modification of the above decision problem (\Cref{claim:collapses_iff_hamiltonian} and \Cref{thm:sparse_is_NP-complete}). We then show that if the set of moves is extended to also include bistellar moves, the minimal sequence of moves stays unchanged. This gives the main result:

\begin{theorem} 
\label{thm:main}
\textsc{number of bistellar moves and sparse degree-two edge collapses for 3-sphere} is NP-hard.
\end{theorem}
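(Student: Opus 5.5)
The plan is a Karp reduction from a version of \textsc{hamiltonian path} that remains NP-hard on planar graphs of maximum degree three. Given such a graph $G$ with $n$ vertices, we embed it in $S^2\subset S^3$ and build the associated triangulation $\cT_G$ of the 3-sphere from \Cref{subsec:construction}. The construction is local: each vertex and each edge of $G$ is replaced by a bounded gadget of tetrahedra, so $\cT_G$ has size $O(n)$ and is computable in polynomial time. As target triangulation $\cT_2$ we take a fixed small triangulation obtained from $\cT_G$ by the ``ideal'' simplification, and we set $k$ to be the number of sparse degree-two edge collapses this simplification uses, which is linear in $n$. The output of the reduction is the instance $(k,\cT_G,\cT_2)$.

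The proof then splits into two parts. First, we invoke \Cref{claim:collapses_iff_hamiltonian}: $G$ has a Hamiltonian path if and only if $\cT_G$ can be reduced to $\cT_2$ by exactly $k$ sparse degree-two edge collapses. Together with \Cref{thm:sparse_is_NP-complete}, this gives NP-hardness of the restricted problem in which only these collapses and their inverses are allowed. The idea is that each collapse consumes one degree-two edge, and sparseness forces consecutive collapses to follow adjacent gadgets. A sequence of $k$ collapses therefore traces a path through the vertex gadgets visiting each one exactly once.

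Second, we show that allowing bistellar moves and inverse collapses does not shorten the minimum. I would argue by counting. Each sparse degree-two collapse removes exactly two tetrahedra. A $1$--$4$ or $2$--$3$ move and its inverse changes the tetrahedron count by $\pm 3$ or $\pm 1$, and an inverse collapse adds two tetrahedra. Since $\cT_2$ has $2k$ fewer tetrahedra than $\cT_G$, any sequence of length at most $k$ must consist only of collapses, and this already holds when $k$ equals the number of collapses in the ideal simplification. The $3$--$2$ move is the exception: it removes only one tetrahedron, so a mix of $3$--$2$ moves and collapses cannot reach a net decrease of $2k$ in $k$ moves. Any $4$--$1$ move, which removes three tetrahedra, must be excluded or compensated. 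For this I would use a local invariant of $\cT_G$, such as the absence of degree-four vertices or the number of vertices. Vertices change only under $1$--$4$/$4$--$1$ moves and under collapses, where each collapse removes one vertex. Tracking the pair (number of tetrahedra, number of vertices) as a potential should show that every move changes it by at most what a collapse does, with equality only for collapses.

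The main obstacle is this second step: ruling out $4$--$1$ moves, or a clever mixture of moves, that achieve the same count reduction more cheaply. If the simple potential is not enough, I would show directly that $\cT_G$ and all triangulations reachable from it by collapses have no vertex of degree four, so $4$--$1$ moves never apply along a minimal sequence. Then a parity and count argument forces the minimal sequence to consist of $k$ collapses, and the first part applies. Finally, NP-hardness transfers to the stated problem because the reduction is polynomial.
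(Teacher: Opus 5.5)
Your outline has the same architecture as the paper's proof: cite \Cref{claim:collapses_iff_hamiltonian} and \Cref{thm:sparse_is_NP-complete} for the collapse-only problem, then use the complexity pair $(t,v)$ to show that the larger move set does not shorten the minimum. However, the first step fails as you set it up. \Cref{claim:collapses_iff_hamiltonian} is proved for the paper's source problem, \textsc{modified planar hamiltonian path} (at least two degree-one vertices), and it is false for planar graphs of maximum degree three.

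The $k=e(\Gamma)-v(\Gamma)+1$ collapses do not trace the path through the vertex gadgets. Each one collapses an edge $e^\perp$ dual to a \emph{non-path} edge $e$, and sparseness means both endpoints of $e$ have degree at least three at that moment, because their type (E2) edges are the $f_i$. The degree-one hypothesis makes this automatic: a Hamiltonian path must end at the two leaves, so every non-path edge joins two interior path vertices, each of degree at least $2+1$ while that edge is present. For your class this breaks. Take $K_4$: it is planar, cubic and has a Hamiltonian path. But a deletion drops both endpoints to degree two, so the deleted edges form a matching, and at most $2<3=k$ sparse collapses are possible from $\cT(K_4)$. Combined with the counting below, $(\cT(K_4),\cT(L_3),3)$ is then a no-instance coming from a yes-instance.

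Also, $\cT_2$ must be $\cT(L_{v(\Gamma)-1})$, which is not of fixed size. Crucially, it can be written down without knowing any Hamiltonian path, since $\cT(C)\cong\cT(L_{v(\Gamma)-1})$ for every such $C$. Defining it as the output of ``the ideal simplification'' presupposes the path.

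In the counting step you have the right ingredient but two false claims. First, tetrahedra alone cannot exclude $4$--$1$ moves: when $3\mid k$, $2k/3$ of them account for a loss of $2k$ tetrahedra in fewer than $k$ moves. Second, it is not true that every move changes $(t,v)$ by at most what a collapse does: a $4$--$1$ move gives $(-3,-1)$ against the collapse's $(-2,-1)$.

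The clean argument, which is what the paper's computation $2a+b+c+2f\le 0$ amounts to, uses vertices first. We have $c(\cT_1)-c(\cT_2)=(2k,k)$, and every move lowers $v$ by at most one, with equality exactly for $4$--$1$ moves and collapses. So a sequence of at most $k$ moves consists of exactly $k$ such moves, $d+e=k$. Then $3d+2e=2k$ forces $d=0$. Equivalently, $4v-t$ drops by $2$ under a collapse and by at most $1$ under every other move.

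Your fallback via degree-four vertices would not work anyway. In a mixed sequence the intermediate triangulations need not be reachable by collapses alone, and a $1$--$4$ move creates a degree-four vertex.
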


This immediately implies the following: 

\begin{corollary} 
\label{cor:main}
\textsc{number of bistellar moves and sparse degree-two edge collapses for closed 3-manifold} is NP-hard.
\end{corollary}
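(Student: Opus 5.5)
The corollary follows from \Cref{thm:main} by observing that the 3-sphere problem is a restriction of the closed 3-manifold problem, so the identity map on instances is a polynomial-time Karp reduction. Concretely, an instance of \textsc{number of bistellar moves and sparse degree-two edge collapses for 3-sphere} consists of a natural number $k$ and two topological triangulations $\cT_1,\cT_2$ of $S^3$. Since $S^3$ is a closed 3-manifold, the same triple $(k,\cT_1,\cT_2)$ is a legitimate instance of the closed 3-manifold problem, and no transformation of the input is needed.

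The one point to check is that the answer is the same in both problems. The admissible moves (bistellar moves, sparse degree-two edge collapses, and their inverses) are defined locally in terms of the triangulation. They do not depend on whether the underlying manifold is declared to be $S^3$ or an arbitrary closed 3-manifold. The question ``can $\cT_1$ be turned into $\cT_2$ by at most $k$ such moves'' is therefore literally the same question in both settings, so yes-instances correspond to yes-instances and no-instances to no-instances.

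Consequently, any polynomial-time algorithm for the closed 3-manifold problem would decide the 3-sphere problem in polynomial time. Composing the identity reduction with the reduction from the Hamiltonian path variant that underlies \Cref{thm:main} (via \Cref{claim:collapses_iff_hamiltonian} and \Cref{thm:sparse_is_NP-complete}), every problem in NP reduces in polynomial time to the closed 3-manifold problem, which is therefore NP-hard.

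The only possible subtlety is the formal encoding of inputs. If the general problem expects the input triangulation to come with a certificate that it is a closed 3-manifold, we supply the trivial certificate inherited from the 3-sphere instance. If instead the general problem must itself verify manifoldness, this is polynomial-time checkable via links of vertices and edges, so the reduction remains polynomial either way. I expect no real obstacle here; all the difficulty lies in \Cref{thm:main}.
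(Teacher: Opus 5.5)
Your proof is correct and takes the same approach as the paper, which simply says that \Cref{thm:main} ``immediately implies'' the corollary. The reason is that every $S^3$ instance is already an instance of the closed 3-manifold problem with the same yes/no answer, so the identity map is a polynomial-time reduction. Your remark about checking manifoldness is harmless but unnecessary: the triangulations produced by the reduction are 3-sphere triangulations by construction.
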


\subsection{Relation to \textsc{homeomorphism of 3-manifolds}.}

The following problem is at the heart of the topological study of 3-manifolds.

\textsc{3-manifold homeomorphism.} Given two triangulations of 3-manifolds, are the manifolds homeomorphic?

It is known to be decidable \cite{ScottShort}, with various theoretical results involved in the solution, including the proof of Thurston's Geometrisation Conjecture. The decidability implies that an upper bound on the number of moves required to pass from one triangulation to another of the same 3--manifold is computable as a function on input the sizes of the two triangulations (Theorem 4.3 in \cite[\S 4]{Lackenby-algorithms-2022}).  

The current upper bound on the complexity of \textsc{3-manifold homeomorphism} is $2^{2^{...^{2^t}}}$ due to Kuperberg \cite{Kuperberg}, where $t$ is the sum of the number of tetrahedra in the given triangulations, and the height of the tower is a universal but unknown constant. The only known lower bound is due to Lackenby \cite{LackenbyNPhard}: \textsc{3-manifold homeomorphism} is at least as hard as \textsc{graph isomorphism}, which has recently been shown to be solvable in quasi-polynomial time by Babai \cite{Babai}. A more specific lower bound is an open question, recorded, for example, in the list of problems in low-dimensional topology known as \emph{Kirby's list} \cite{K3}. Some restricted versions of this problem lie in NP, e.g. recognition of the 3--sphere~\cite{Ivanov, Schleimer}, Seifert fibered spaces with boundary~\cite{Jackson-recognition-2025}, and elliptic 3--manifolds~\cite{Lackenby-recognising-2026}. A survey by Lackenby~\cite[\S 4]{Lackenby-algorithms-2022} discusses this further.

A related decision problem is that of \textsc{link isotopy}. An algorithm for \textsc{link isotopy} can be obtained from an algorithm for \textsc{3-manifold homeomorphism} as shown by Matveev~\cite[Corollary 6.1.4]{matveev_book}. The existence of a polynomial algorithm for \textsc{link isotopy} in general is an open problem~\cite{K3}, and is known for the class of links given by alternating diagrams, as recently shown by Haider and Tsvietkova~\cite{HT}. For related decision problems, see another survey by Lackenby~\cite{LackenbyKnotTheory}.  

\subsection{Lower bounds on algorithmic complexity of problems on 3-manifolds.}

New strategies to prove NP-hardness can be of independent interest, especially given that discrete complexity theory still has many open questions. Our Karp reduction is based on a new construction, different from existing NP-hardness proofs in 3-dimensional topology. A number of such NP-hardness results exist. A subset of them concerns properties of links and knots and their projections on the 2--sphere: the sublink problem \cite{LackenbyNPhard}, $k$-component unlink as a sublink problem \cite{Koenig-NP-2021, deMesmayRieckSedgwickTancer, Rieck}, unlinking and unknotting in $k$ Reidemeister moves \cite{Koenig-NP-2021, deMesmayRieckSedgwickTancer}, splitting and unlinking numbers of links \cite{Koenig-NP-2021, KoenigTsvietkova1}, diagrammatic unlinking number \cite{Koenig-NP-2021}, crossing number of a link \cite{deMesmaySchaeferSedgwick}, alternating $k$-component sublink \cite{Koenig-NP-2021}. Another subset concerns embedded surfaces in link and knot complements: knot genus in an arbitrary manifold \cite{AgolHassThurston}, Thurston complexity of a link in 3-sphere \cite{LackenbyNPhard}, embedded non-orientable surface of given Euler characteristic in a 3-manifold \cite{BurtondeMesmayWagner}. Yet another set concerns embedding of complexes of various dimension into manifolds of various dimension, such as curves in manifolds of dimension 3~\cite{deMesmayRieckTancer}. The problems of detecting taut angle structures on a triangulation \cite{BurtonSpreer} and computating optimal Morse matchings~\cite{Joswig-computing-2006} are perhaps closest in spirit to the topic of this paper, though these NP-hardness results concerns a property of a fixed 3-manifold triangulation rather than a property of a 3-manifold and its set of triangulations.

One can modify the set of moves in the decision problem we consider. One direction is to investigate it for other types of crushing and edge collapse moves appearing in the literature. Another direction would be restricting the set of moves. For example, the following question is still open.

\begin{question}

Is \textsc{number of bistellar moves for a 3-manifold} NP--hard? 
\end{question}

\paragraph{Acknowledgements.} 
This research was partially funded by the International Visitor Scheme of the Sydney Mathematical Research Institute (SMRI). Research of Tillmann was supported in part under the Australian Research Council's Discovery funding scheme (project number DP190102259). Tsvietkova was partially supported by NSF DMS-2005496 and NSF CAREER DMS-2142487 grants, and Rutgers Board of Trustees Research Fellowship for Scholarly Excellence.

%%%%%%%%%%%%%%%%%%%%%%%%%%%
%%%%%%%%%%%%%%%%%%%%%%%%%%%

\section{Moves between 3--manifold triangulations}
\label{sec:moves}

The triangulations considered in this paper are defined in \Cref{subsec:triangulations}. Given triangulations $\cT$ and $\cT'$, we write $\cT \mapsto \cT'$ if the triangulations are related by a single \textbf{elementary move} that is one of the \textbf{bistellar moves} defined in \Cref{subsec:bistellar}, or an \textbf{sparse degree-two edge collapse} or its inverse as defined in \Cref{subsec:edge_collapse}.

%%%%%%%%%%%%%%%%%%%%%%%%%%%

\subsection{Triangulations}
\label{subsec:triangulations}

We refer the reader to \cite{Rubinstein-traversing-2019} for a detailed literature review of different types of 3--manifold triangulations and results that describe how to connect any two such triangulations by local moves. In this paper, we are only concerned with the following triangulations of closed 3--manifolds. 

Let $\widetilde{\Delta}$ be a finite union of pairwise disjoint euclidean $3$--simplices. 
Every $k$--simplex $\tau$ in $\widetilde{\Delta}$ is contained in a unique $3$--simplex $\sigma_\tau.$ A $2$--simplex in $\widetilde{\Delta}$ is called a \emph{facet}.
Let $\Phi$ be a family of affine isomorphisms pairing the facets in $\widetilde{\Delta},$ with the properties that $\varphi \in \Phi$ if and only if $\varphi^{-1}\in \Phi,$ and every facet is the domain of a unique element of $\Phi.$ The elements of $\Phi$ are termed \emph{face pairings}.

The quotient space $P = \widetilde{\Delta}/\Phi$ with the quotient topology is then a \emph{closed $3$--dimensional pseudo-manifold}, and the quotient map is denoted $p\co \widetilde{\Delta} \to P.$ 
We will always assume that $P$ is connected. As shown in \cite[\S2.2]{Rubinstein-traversing-2019}, the quotient space $P$ is a manifold if and only if
\begin{enumerate}
\item the restriction of the map $p\co \widetilde{\Delta} \to P$ to the interior of each {$1$-simplex} in $\widetilde{\Delta}$ is a homeomorphism, and
\item the link of each vertex in $P$ is a 2--sphere.
\end{enumerate}

Given a closed 3--manifold $M$, we say that the triple $\cT = ( \widetilde{\Delta}, \Phi, h)$ is a \emph{triangulation} of $M$ if $h\co P \to M$ is a piecewise linear homeomorphism.

The images of vertices, edges, triangles and tetrahedra under the
composition of the map $p\co \widetilde{\Delta} \to P$ with $h$ give us the 
vertices, edges, triangles and tetrahedra of the triangulation in $M.$ These generally have self-identifications along their boundaries. The \textbf{degree} of an edge $e \subset M$ is the number of 1--simplices in the preimage $p^{-1}(e).$ 

A key result of Moise~\cite{Moise-affine-1952} states that every three-dimensional manifold can be triangulated in the way described above.

For the triangulation $\cT$ of a closed 3--manifold $M,$ let 
\begin{itemize}
\item $t$ be the number of tetrahedra in $\cT$, and
\item $v$ be the number of vertices in the triangulation. 
\end{itemize}
We then define the \textbf{complexity pair} of $\cT$ to be 
\[
c(\cT) = (t,\; v).
\] 

%%%%%%%%%%%%%%%%%%%%%%%%%%%
\subsection{Bistellar moves}
\label{subsec:bistellar}

A finite set of local moves that change one triangulation to another are the \emph{bistellar moves} that have been popularised by Pachner~\cite{Pachner-bistellare-1978}. Previous work by Alexander~\cite{Alexander-combinatorial-1930} and Newman~\cite{Newman-foundation-1926} used an infinite set of stellar moves.
See~\cite{Rubinstein-traversing-2019} for a detailed discussion of these results and more recent work.

The bistellar moves for 3--manifold triangulations are as follows. The 1-4 move introduces a vertex inside a tetrahedron and connects it to the four vertices of the tetrahedron with four edges. This creates six new triangular faces, spanned by the edges of the tetrahedron and the new vertex, and four new tetrahedra. The inverse move is called a 4-1 move. Then there is the 2-3 move, which is performed on two different tetrahedra meeting in a triangular face. The 2-3 move deletes this face by introducing a new edge connecting the opposite corners of the tetrahedra. Its inverse is the 3-2 move. Each of these moves changes the number of tetrahedra, faces and edges. However, the 2-3 and 3-2 moves do not affect the number of vertices in the triangulation. 

In summary, we have the following effects by these elementary moves on the complexity pair:
\begin{align*}
B_{1-4}:(t,\; v)  &\mapsto (t,\; v) +(\phantom{-}3,\; \phantom{-}1)\\
B_{2-3}:(t,\; v)  &\mapsto (t,\; v) +(\phantom{-}1,\; \phantom{-}0)\\
B_{3-2}:(t,\; v)  &\mapsto (t,\; v) +(-1,\; \phantom{-}0)\\
B_{4-1}:(t,\; v)  &\mapsto (t,\; v) +(-3,\; -1)
\end{align*}

\begin{figure}[htbp]
\centering
\includegraphics[width=0.8\textwidth]{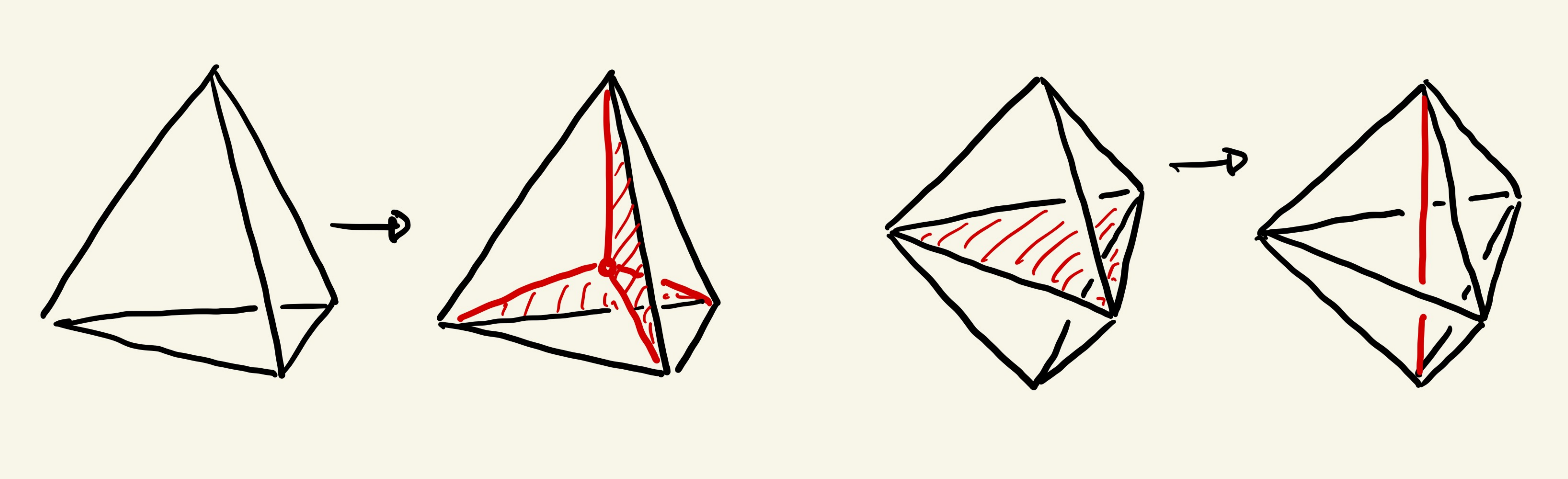}
\caption{The 1-4 and 2-3 moves.} 
\label{1-4_and_2-3_moves}
\end{figure}

The following result shows that the moves defined above play the same role for closed 3-manifolds as the Reidemeister moves in knot theory:
\begin{theorem}[Alexander, Newman, Moise, Pachner]
\label{pachner}
The set of all triangulations of a closed three-dimensional manifold $M$ is connected under 1-4, 2-3, 3-2 and 4-1 moves.
\end{theorem}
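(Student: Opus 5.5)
The theorem is classical, and I would prove it by reducing to the simplicial case and then chaining the results of Moise, Alexander–Newman and Pachner. Let $\cT_1, \cT_2$ be triangulations of $M$ in the sense of \Cref{subsec:triangulations}. They may be singular: tetrahedra can have self-identifications, and several edges can join the same pair of vertices. The first step is to show that each $\cT_i$ is connected by 1-4, 2-3, 3-2 and 4-1 moves to a genuine simplicial complex. I would take the second barycentric subdivision $\cT_i''$, which is always a simplicial complex triangulating $M$ (even the first one need not be, because of the self-identifications). I would then realise barycentric subdivision by bistellar moves, applied locally, skeleton by skeleton.
\begin{itemize}
\item A 1-4 move stars each tetrahedron.
\item A 2-3 move followed by suitable 2-3 and 3-2 moves cones off each triangle. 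Here one must check that the two tetrahedra involved are distinct, or first subdivide to make them so.
\item A sequence of 2-3 moves around each edge, followed by 3-2 and 4-1 moves that remove the auxiliary vertices, stars that edge.
\end{itemize}
This is the approach of \cite[\S2]{Rubinstein-traversing-2019}, whose local arguments I would invoke rather than redo.

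Once both triangulations are simplicial complexes $K_1, K_2$ with homeomorphisms to $M$, the second step uses Moise's work~\cite{Moise-affine-1952}: the Hauptvermutung holds in dimension three. Hence $K_1$ and $K_2$ are PL homeomorphic and have a common subdivision $K$, after composing with a homeomorphism that is isotopic to a PL one. The third step is Alexander–Newman~\cite{Alexander-combinatorial-1930, Newman-foundation-1926}: any two PL-homeomorphic complexes, in particular a complex and any subdivision of it, are related by a finite sequence of stellar subdivisions and their inverses (stellar welds). Therefore $K_1$ and $K$ are connected by stellar moves, and likewise $K_2$ and $K$.

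The final step is Pachner's theorem~\cite{Pachner-bistellare-1978}, which expresses every stellar move on a closed combinatorial $3$-manifold as a finite composition of bistellar moves. Since stellar subdivision on a $3$-simplex is already a 1-4 move, the content lies in stellar subdivision of an edge or of a triangle. For these I would take the link of the simplex being starred, which is a combinatorial circle or $S^0$. Starring it is achieved by first applying 2-3 moves to retriangulate the star into a cone shape, and then a 1-4 or 4-1 move; the inverse welds are obtained by reversing the sequence. Concatenating the three steps gives a bistellar path $\cT_1 \to \cT_1'' \to K \to \cT_2'' \to \cT_2$.

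I expect the main obstacle to be the first step, passing from the singular triangulations allowed here to simplicial complexes. Pachner's and Alexander–Newman's theorems are stated for simplicial complexes, while the bistellar moves on singular triangulations can fail to be well defined. For example, a 2-3 move requires two \emph{distinct} tetrahedra, and a 3-2 move requires the three tetrahedra around a degree-three edge to be distinct. My first attempt would be to apply 1-4 moves to every tetrahedron and then 2-3 moves to every interior triangle. This separates self-identified faces and makes every required move legal, after which the barycentric-subdivision argument proceeds simplex by simplex. If that becomes too intricate, I would simply cite \cite{Rubinstein-traversing-2019}, where exactly this extension to singular triangulations is carried out.
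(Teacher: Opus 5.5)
The paper does not prove this theorem. It quotes it as a classical result and refers to \cite{Rubinstein-traversing-2019} for a discussion of it, including versions for the more general triangulations used here. Your chain is the standard argument behind that citation, so the route is sound: second barycentric subdivision to reach simplicial complexes, then Moise, then Alexander--Newman, then the local realisation of stellar moves by bistellar moves (in dimension three the relevant links are only circles or $S^0$).

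\textbf{The explicit move sequences are wrong.} These sequences carry the real content of your first and last steps, and a vertex count shows they fail as written.
\begin{itemize}
\item 2-3 and 3-2 moves preserve the number of vertices, and a 4-1 move lowers it, whereas starring a triangle or an edge creates a vertex.
\item So neither ``a 2-3 move followed by 2-3 and 3-2 moves'' nor ``2-3 moves followed by 3-2 and 4-1 moves'' can cone off a triangle or star an edge.
\item In your Pachner step, ending with a 4-1 move is excluded by the same count.
\item Ending with a 1-4 move leaves the new vertex with only four vertices in its link. After starring a triangle its link should have five vertices, and after starring an edge of degree $m$ it should have $m+2$.
\end{itemize}

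\textbf{The correct sequences begin with the 1-4 move.}
\begin{itemize}
\item To star a triangle $[a,b,c]$ lying in the tetrahedra $[a,b,c,p]$ and $[a,b,c,q]$: do a 1-4 move on $[a,b,c,p]$ at a new vertex $x$, then a 2-3 move on $[x,a,b,c]$ and $[a,b,c,q]$.
\item To star an edge $[a,b]$ whose link is the cycle $c_1,\dots,c_m$: do a 1-4 move on $[a,b,c_1,c_2]$ at $x$. Then do 2-3 moves on $[x,a,b,c_i]$ and $[a,b,c_i,c_{i+1}]$ for $i=2,\dots,m-1$. Finish with a 3-2 move on $[a,b]$, which now has degree three.
\item Welds are the reverse sequences.
\end{itemize}

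\textbf{These sequences also work in the singular setting} if you star all tetrahedra first, then triangles, then edges.
\begin{itemize}
\item After the first stage, the two tetrahedra containing an original triangle are cones over distinct facets. They are therefore distinct even when both facets belong to the same original tetrahedron.
\item An original edge of degree $\ell$ has degree $m=2\ell\ge 2$ when it is starred. The case $m=2$ (for instance the degree-one edges of $\cT(L_n)$) simply means no 2-3 moves are needed.
\end{itemize}
With these replacements your outline is complete.
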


%%%%%%%%%%%%%%%%%%%%%%%%%%%

\subsection{Sparse degree-two edge collapse}
\label{subsec:edge_collapse}

Edge collapses were introduced by Burton~\cite[\S 2.2.4]{Burton-computational-2013}. In this paper, we only consider edge collapses of certain degree-two edges that are more restrictive than those allowed in \cite{Burton-computational-2013}. Suppose $e$ is an edge of degree two in $M$ and denote $\Delta_1$ and $\Delta_2$ the tetrahedra containing $e.$ We require that $\Delta_1 \cup \Delta_2$ is 3--ball embedded in the interior of $M.$ In particular, for each $i \in \{ 1, 2\},$
\begin{itemize}
\item $\Delta_i$ has four pairwise distinct vertices; 
\item  there is an edge $f_i \subset \Delta_i$ that is not shared with the other tetrahedron. 
\end{itemize}
If the edges $f_i$ both have degree at least three, then we call $e$ a \textbf{sparse degree-two edge}. \Cref{fig:deg2collapse} shows $\Delta_1 \cup \Delta_2,$ where tetrahedra $\Delta_1$ and $\Delta_2$ share the edge $e$ of degree two between distinct vertices $R_1$ and $R_2,$ and are spanned by the additional vertices $N$ and $S.$ 

The \textbf{sparse degree-two edge collapse} of $e$ is the following homotopy of $M$ that takes the 3--ball $\Delta_1 \cup \Delta_2$ to a 2--ball that is the union of 2--faces, and such that it takes the triangulation of $M$ to a triangulation with two tetrahedra fewer. The homotopy identifies $R_1$ and $R_2$ and flattens each tetrahedron to a face with three pairwise distinct vertices. By hypothesis, the edges $f_1$ and $f_2$ between the two other vertices $N$ and $S$ are distinct before and after the homotopy and have degree at least two in the new triangulation. Note that the edges $g_i$ and $h_i$ have degree at least three since $f_1 \neq f_2,$ and hence each of $\Delta_1$ and $\Delta_2$ is incident with a unique edge of degree two.\footnote{The adjective sparse is meant to indicate that even if a triangulation admits a collapse of a degree-two edge as defined in \cite{Burton-computational-2013}, then it may not be that of a sparse degree-two edge.}

By construction, the sparse degree-two edge collapse changes the triangulation, but not the underlying identification space. Suppose that for the previously defined quotient map $p$ and homeomorphism $h$, we have $h\circ p(\widetilde{\Delta}_i) = \Delta_i$, where $\widetilde{\Delta}_i \in \widetilde{\Delta}.$ Then the edge collapse gives us a new triangulation of $M$ with set of tetrahedra $\widetilde{\Delta} \setminus \{ \widetilde{\Delta}_1, \widetilde{\Delta}_2\}.$ The new face pairings are obtained by deleting all face pairings from $\Phi$ with domain or target in each $\Delta_i$, $i\in \{1, 2\}$, and introducing new face pairings between the four free faces as determined by the homotopy. In particular, on the level of the triangulation, the sparse degree-two edge collapse and the resulting triangulation are completely encoded by specifying the two tetrahedra sharing the sparse degree-two edge. 

\begin{figure}[htbp]
\centering
\includegraphics[width=0.8\textwidth]{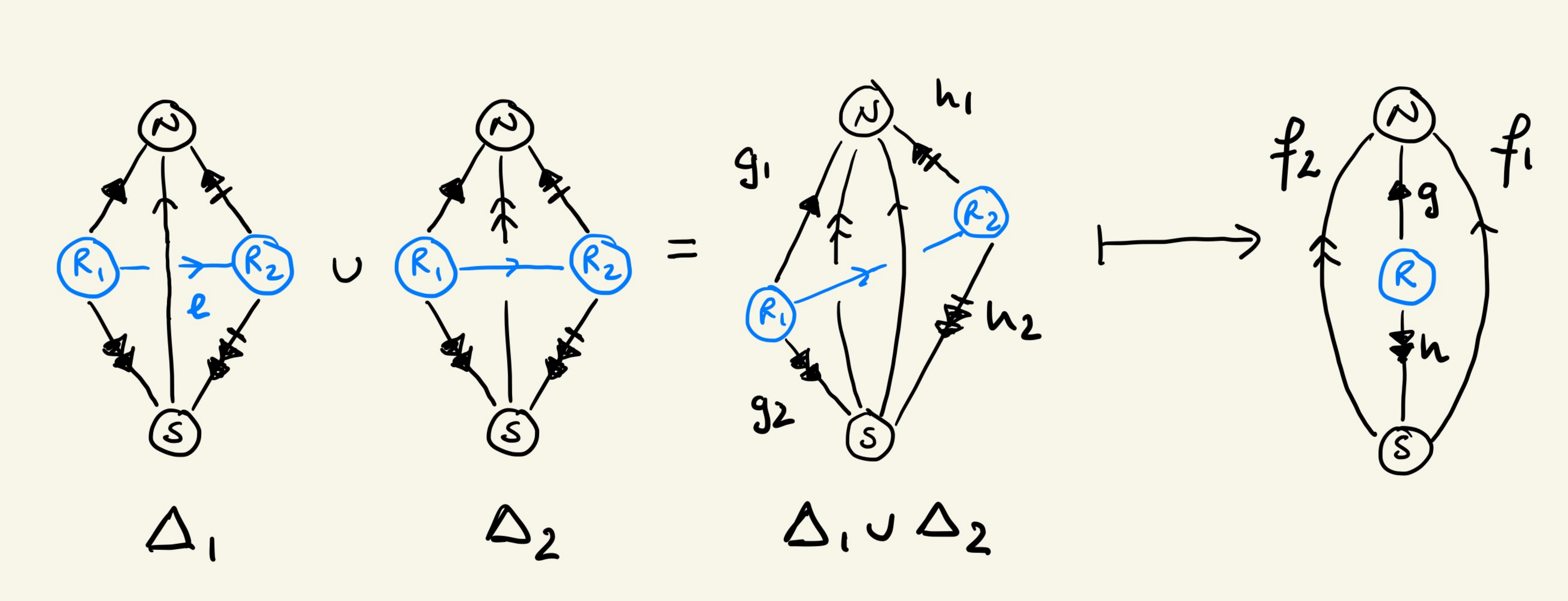}
\caption{The sparse degree-two edge collapse.} 
\label{fig:deg2collapse}
\end{figure}

The operation \textbf{inverse} to the sparse degree-two edge collapse operates on two faces in $\cT$ spanned by three pairwise distinct vertices with the property that the faces share exactly two edges and form an embedded disc in $M$ whose boundary edges have degree at least two. One can choose a normal direction to the interior of the disc in $M$ and perform the inverse operation in a small thickening of the disc in the normal direction. 

The effect of the sparse degree-two edge crush and of its inverse on the complexity pair are respectively:
\begin{align*}
S_{2}:(t,\; v)  &\mapsto (t,\; v) +(-2,\; -1)\\
T_{2}:(t,\; v)  &\mapsto (t,\; v) +(\phantom{-}2,\; \phantom{-}1)
\end{align*}

We can now consider the following decision problem.

\textsc{number of sparse degree-two edge collapses for 3-sphere}. Given a natural number $k$ and two topological triangulations of a given 3--manifold, can we get from one triangulation to the other using at most $k$ sparse degree-two edge collapses?

We remark that the sparse degree-two edge collapse is \emph{somewhat perpendicular} to the $2-0$ edge move defined in \cite{Burton-computational-2013}. With reference to \Cref{fig:deg2collapse}, one can imagine the sparse degree-two edge collapse as performed by flattening $\Delta_1\cup \Delta_2$ with the left hand, collapsing the blue edge connecting $R_1$ and $R_2$ to a single vertex. In contrast, the $2-0$ edge move would be performed by flattening the tetrahedra with the right hand, leaving the blue edge connecting $R_1$ and $R_2$ and instead identifying the two edges connecting $N$ and $S$.

%%%%%%%%%%%%%%%%%%%%%%%%%%%
%%%%%%%%%%%%%%%%%%%%%%%%%%%

\section{The Karp reduction from \textsc{hamiltonian path}}
\label{sec:karp_reduction}

The classical problem \textsc{planar hamiltonian cycle} was proved to be NP--complete by Garey, Johnson, and Tarjan \cite{Garey-planar-1976}. The following modified version was shown to be NP--complete by Koenig and Tsvietkova \cite[Theorem 15]{Koenig-NP-2021}:

 \textsc{modified planar hamiltonian path.} Given planar graph $\Gamma$ with at least two vertices of degree one, find a path that passes through each vertex of $\Gamma$ exactly once.

\vspace{0.05in}

In this section, we construct a Karp reduction of \textsc{modified planar hamiltonian path} to \textsc{number of sparse degree-two edge collapses for 3-sphere}.

%%%%%%%%%%%%%%%%%%%%%%%%%%%

\subsection{Construction of 3--sphere triangulations}
\label{subsec:construction}

A graph is \textbf{simplicial} if it has no one-vertex one-edge loops and if it does not have multiple edges between the same two vertices. 

Let $\Gamma$ be a connected planar simplicial graph and $\varphi \co \Gamma \to S^2$ be an embedding. To simplify notation, we write $\Gamma_\varphi = \varphi(\Gamma).$ The connected components of $S^2 \setminus \Gamma_\varphi$ are called the \textbf{regions} of $\Gamma_\varphi.$ Let $r(\Gamma_\varphi)$ be their total number, and let $v(\Gamma)$ and $e(\Gamma)$ be the number of vertices and edges in $\Gamma.$ Euler's formula gives $2 = v(\Gamma) - e(\Gamma) + r(\Gamma_\varphi),$ and hence the number of regions is independent of the embedding and can be denoted by $r(\Gamma).$ 

We now construct a triangulation $\cT(\Gamma_\varphi)$ of the 3-sphere that only depends on the isotopy class of $\Gamma_\varphi \subset S^2$. The reader may wish to refer to the examples given in \Cref{subsec:examples}.

Let $\Gamma_\varphi^\perp \subset S^2$ be the dual graph to the cell decomposition of $S^2$ determined by $\Gamma_\varphi.$  That is, $\Gamma_\varphi^\perp$ has exactly one vertex in each region of $\Gamma_\varphi$ and one edge dual to each edge of $\Gamma_\varphi.$ We note that $\Gamma_\varphi^\perp$ is planar but not necessarily simplicial.

We claim that each vertex of $\Gamma_\varphi^\perp$ has degree at least two. If $\Gamma$ is a tree, then there is a single vertex in $\Gamma_\varphi^\perp$ and its degree is twice the number of edges in $\Gamma,$ and hence at least two. If $\Gamma$ not a tree, then each region has frontier consisting of at least three edges since $\Gamma$ is simplicial. Hence in this case the degree of each vertex of $\Gamma_\varphi^\perp$ is at least three. 

Embed $S^2 \subset S^3$ as the standard equatorial 2--sphere in the standard 3--sphere imbued with its constant curvature Riemannian metric, and denote the components of $S^3 \setminus S^2$ by $H_N$ and $H_S.$ Choose $N \in H_N$ and $S \in H_S$ as the centroids. So the 3--sphere can be seen as a suspension (or bi-cone) on the equatorial 2-sphere, with vertices $N$ and $S$. In particular, the construction we describe can be given combinatorially by viewing $S^3$ as the suspension on $N$ and $S$ rather than in geometric terms that are more suitable for visualisation.

The $0$--skeleton of $\cT(\Gamma_\varphi)$ is the union of $\{N, S\}$ and the vertex set of $\Gamma_\varphi^\perp.$ Hence there are $2+r(\Gamma)$ vertices in $\cT(\Gamma_\varphi).$

Now connect the vertices of $\Gamma_\varphi$ and $\Gamma_\varphi^\perp$ to both $N$ and $S$ by geodesic segments in $S^3.$ The $1$--skeleton of $\cT(\Gamma_\varphi)$ consists of three three types of edges (see \Cref{fig:edges}):

\begin{figure}[htbp]
\centering
\includegraphics[width=0.5\textwidth]{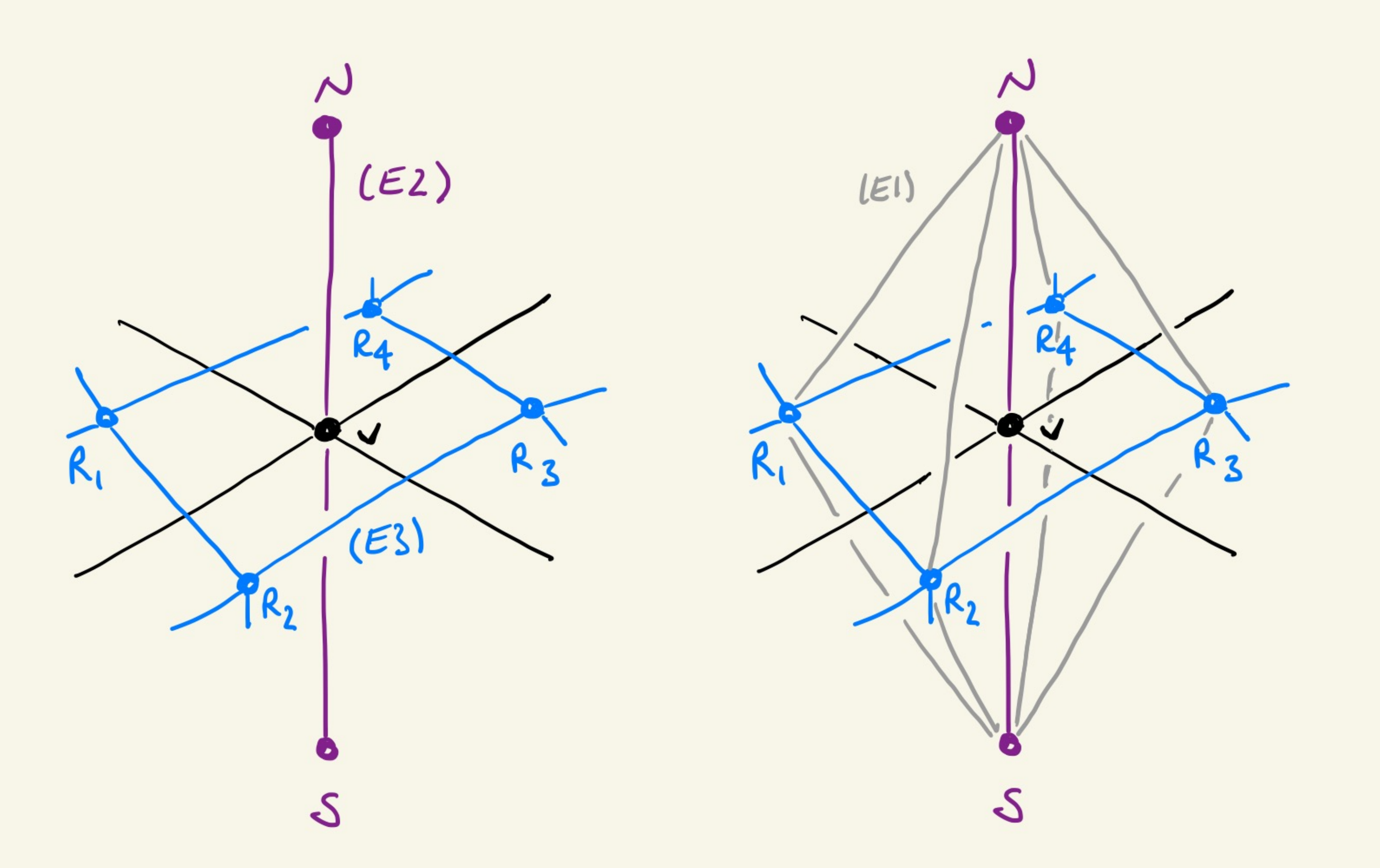}
\caption{Part of $\Gamma_\varphi$ is shown in black, and part of $\Gamma_\varphi^\perp$ in blue. The only edge of type (E2) shown runs from $N$ to $S$ and contains vertex $v$ of $\Gamma_\varphi.$. The edges of $\Gamma_\varphi^\perp$ are of type (E3). The edges connecting vertices of $\Gamma_\varphi^\perp$ to $N$ or $S$ are of type (E1).} 
\label{fig:edges}
\end{figure}

\begin{enumerate}
\item[(E1)] Each segment from a vertex in $\Gamma_\varphi^\perp$ to $N$ (resp. $S$) is an edge in $\cT(\Gamma_\varphi).$ Hence there are $2 r(\Gamma)$ such edges.
\item[(E2)] For each vertex in $\Gamma_\varphi,$ the union of its segments to $N$ and $S$ is an edge in $\cT(\Gamma_\varphi).$ Hence there are $v(\Gamma)$ such edges.
\item[(E3)] Each edge in $\Gamma_\varphi^\perp$ is an edge in $\cT(\Gamma_\varphi)$. Hence there are $e(\Gamma)$ such edges.
\end{enumerate}
It follows that in total, we have $2 r(\Gamma)+v(\Gamma)+e(\Gamma)$ edges in $\cT(\Gamma_\varphi).$ 

\begin{figure}[htbp]
\centering
\includegraphics[width=0.7\textwidth]{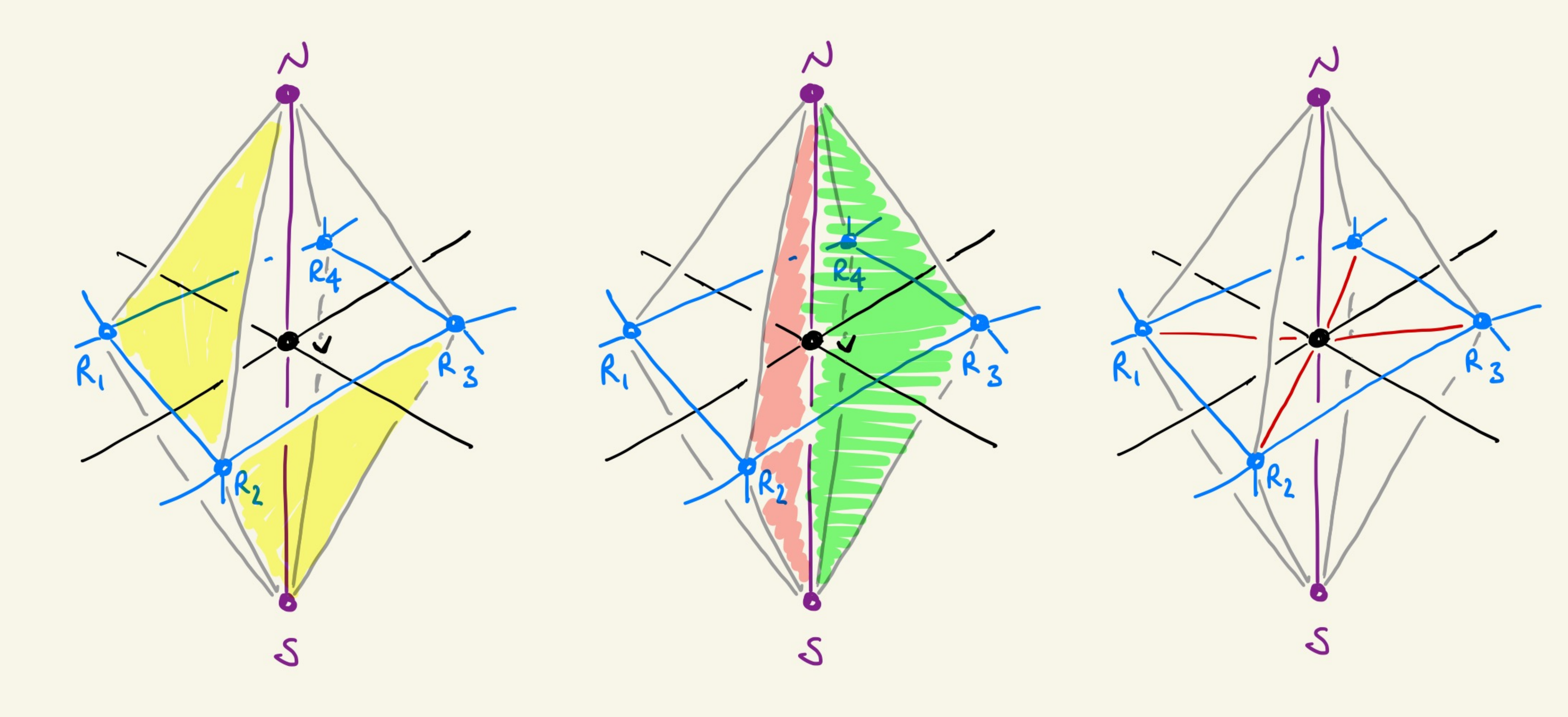}
\caption{Each yellow face shown is a cone on an edge of $\Gamma_\varphi^\perp$ to $N$ or $S.$ The resulting bipyramid in this example is an octahedron. The red and green faces shown result from the subdivision of the bipyramid into tetrahedra meeting around the edge from $N$ to $S.$ The rightmost figure shows the intersection of the faces in the bipyramid with the equatorial sphere.} 
\label{fig:faces}
\end{figure}

Now cone each edge in $\Gamma_\varphi^\perp$ to $N$ (resp. $S$). This gives a set of faces of $\cT(\Gamma_\varphi)$ and decomposes $S^3$ into $v(\Gamma)$ bipyramids with equatorial edges in $\Gamma_\varphi^\perp$ and cone points $N$ and $S.$ Each bipyramid is naturally divided into tetrahedra that are the join of the central edge connecting $N$ and $S$ and an equatorial edge in $\Gamma_\varphi^\perp.$ These are the tetrahedra in $\cT(\Gamma_\varphi).$ It follows from the construction that 
\begin{enumerate}
\item Each edge of type (E1) has degree in $\cT(\Gamma_\varphi)$ twice the degree of the corresponding vertex in $\Gamma^\perp_\varphi.$ Hence its degree is at least four.
\item Each edge of type (E2) has degree in $\cT(\Gamma_\varphi)$ the degree in $\Gamma$ of the corresponding vertex of $\Gamma.$
\item Each edge of type (E3) has degree two in $\cT(\Gamma_\varphi)$. 
\end{enumerate} 
The total number of tetrahedra in $\cT(\Gamma_\varphi)$ is $2e(\Gamma^\perp_\varphi)=2e(\Gamma).$ This implies that the total number of triangles in $\cT(\Gamma_\varphi)$ is $4e(\Gamma).$ 
This completes the construction of $\cT(\Gamma_\varphi).$ As usual, we consider this triangulation up to ambient isotopy of $S^3.$

At this stage, we feel compelled to point out that the above illustrations, whilst correct, do not paint the whole picture as the bipyramids we encounter are usually not embedded in $S^3,$ but may have equatorial vertices identified. The examples in the next section exhibit this behaviour. 

We also mention that different embeddings of a graph may give combinatorially inequivalent triangulations. An example is given in \Cref{fig:inequiv_trigs}. Our main result is independent of the planar embedding chosen for a planar graph.

\begin{figure}[htbp]
\centering
\includegraphics[width=0.5\textwidth]{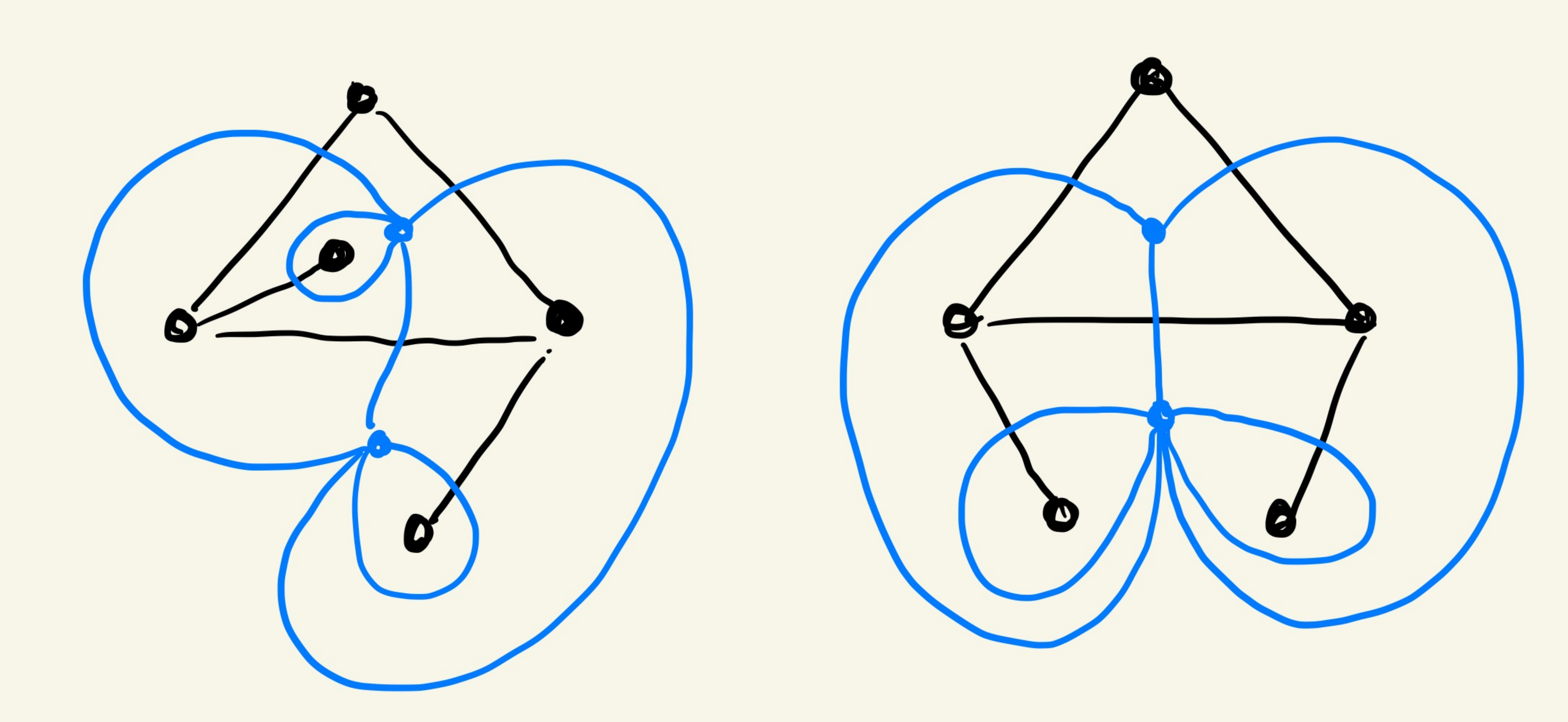}
\caption{Two embeddings of a planar graph (in black), that give inequivalent triangulations. The dual graphs are shown in blue.}
\label{fig:inequiv_trigs}
\end{figure}

%%%%%%%%%%%%%%%%%%%%%%%%%%%

\subsection{Examples}
\label{subsec:examples}

Let $L_n$ be a line segment partitioned into $n$ 1--simplices. Then the isotopy class of the associated triangulation is independent of the embedding of $L_n$ into $S^2$ and hence we write $\cT(L_n).$ Since $L_n$ is a tree, the triangulation $\cT(L_n)$ only has three vertices, denoted $N$, $S$ and $R.$ List the vertices $v_0, \ldots, v_n$ of $L_n$ in linear order. Note that the leaf ending in $v_0$ corresponds to a monogon in the dual graph. In particular, our construction gives a bipyramid on a monogon as shown in the top right of \Cref{fig:L3}. By our construction, this is subdivided into a single tetrahedron (with two faces identified). In particular, the edge of $\cT(L_n)$ corresponding to $v_0$ has degree one. Similarly for $v_n.$ This gives two edges of degree one in $\cT(L_n).$

The $n-1$ edges of $\cT(L_n)$ corresponding to the remaining vertices all have degree two; they are central edges in bipyramids on bigons. The dual graph has one vertex, $R,$ and its $n$ edges correspond to edges of $\cT(L_n)$ that all have degree two. This gives a total of $2n-1$ edges of degree two in $\cT(L_n).$ 

None of the edges of degree two are sparse since the triangulation only has three vertices. There are two more edges in $\cT(L_n)$, one connecting $R$ to $S$ and one connecting $R$ to $N.$ Each of these edges has degree $4n$ since the degree of $R$ in the dual graph is $2n.$ Hence there are $3+2n$ edges in total. For completeness, we record that there are $2n$ tetrahedra in $\cT(L_n):$ one tetrahedron for each of the two vertices of degree one in $L_n$, and two tetrahedra for each of $n-1$ vertices of degree 2. See \Cref{fig:L3} for an example with $n=3$, where we have highlighted a tetrahedron containing the degree-one edge corresponding to $v_0$ and the two tetrahedra containing the degree-two edge corresponding to $v_2.$ 

\begin{figure}[htbp]
\centering
\includegraphics[width=0.9\textwidth]{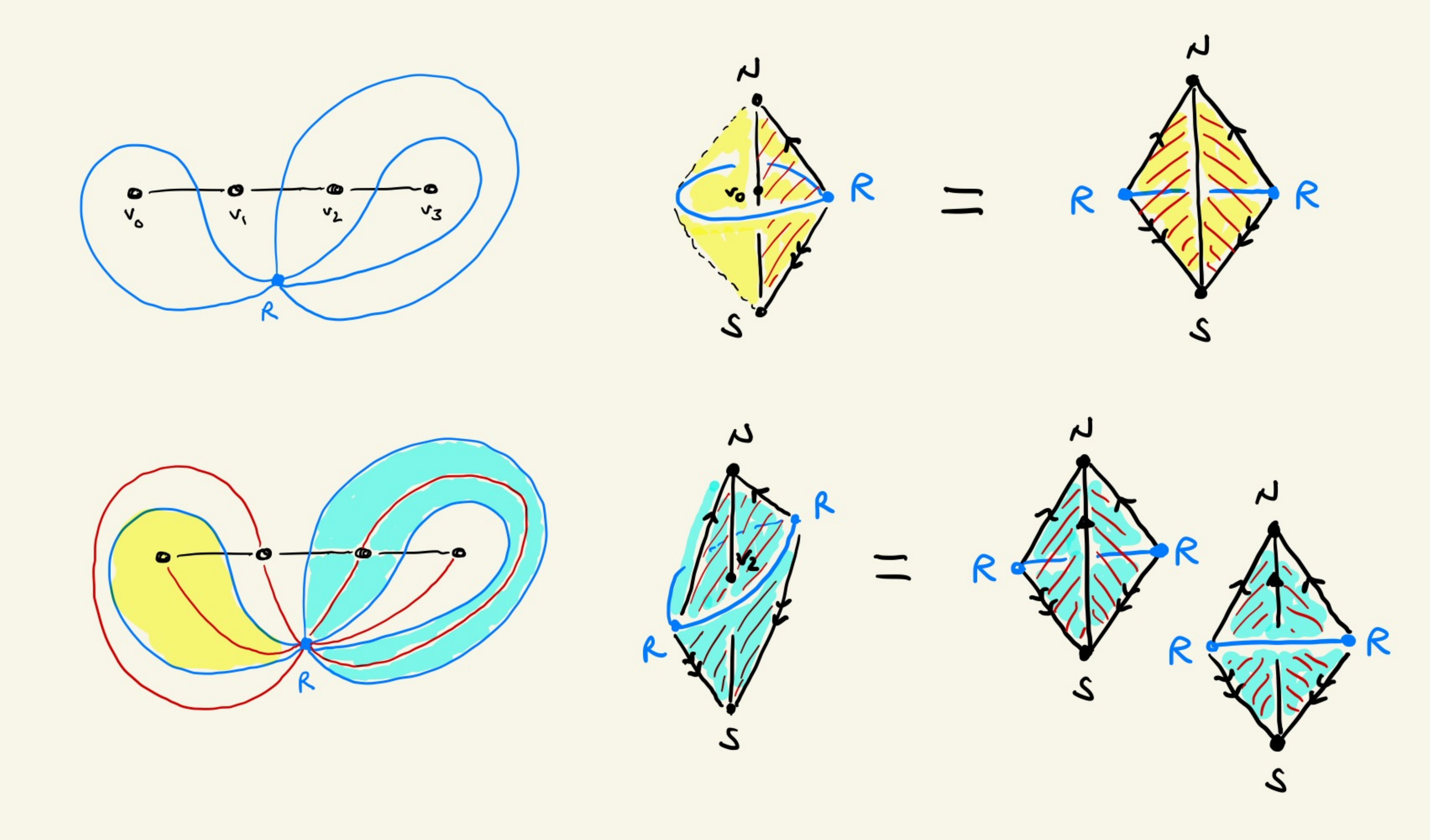}
\caption{Mnemonic of $\cT(L_n)$ for $n=3$. Top center and right: bipyramid based on a monogon, with one-tetrahedron triangulation that has two shaded faces identified together. Bottom center and right: bipyramid based on a bigon, and its decomposition into two tetrahedra. Top left: $\Gamma=L_3\subset S^2$ is drawn in black with vertices $v_i$, and $\Gamma^\perp$ in blue with the single vertex $R$. Bottom left: The intersection of the faces of $\cT(L_3)$ with $S^2$ is indicated in red (as in \Cref{fig:edges}). The edges of $\cT(L_3)$ meet $S_2$ in the edges of $\Gamma^\perp$ and in the vertices of $\Gamma,$ the latter being contained in the interior of edges with endpoints at $S$ and $N.$ The intersection with $S^2$ of the single tetrahedron containing a vertex of degree one of $\Gamma$ is shaded in yellow, and that of the two tetrahedra containing a vertex of degree two in blue. } 
\label{fig:L3}
\end{figure}

%%%%%%%%%%%%%%%%%%%%%%%%%%%

\subsection{Deletion of a non-cut edge with vertex degrees at least three}
\label{subsec:deletion}

We continue with the notation from \Cref{subsec:construction}. Suppose $\Gamma$ has more than one edge. An edge $e$ of $\Gamma$ is \textbf{non-cut} if, after the removal of $e$, we still have a connected graph. Let $\Lambda \subset \Gamma$ be a subgraph that is obtained by deleting a single non-cut edge $e$ with the property that each of its vertices has degree at least three in $\Gamma.$ We denote the restriction of the embedding $\varphi$ to $\Lambda$ also by $\varphi \co \Lambda \to S^2.$ 

Since $e$ is a non-cut edge and not a leaf, $\Lambda$ is connected, $\chi(\Lambda) = \chi(\Gamma) +1$ and $r(\Lambda) = r(\Gamma) -1.$ Indeed, $e$ is adjacent to two distinct regions of $\Gamma_\varphi$, and these have been merged in the complement of $\Lambda_\varphi.$ In terms of the dual graphs, letting $e^\perp \subset \Gamma^\perp_\varphi$ be the edge dual to $e$, we see that $\Lambda^\perp_\varphi$ is obtained from $\Gamma^\perp_\varphi$ by collapsing $e^\perp$ to a point and hence identifying the corresponding endpoints of $e^\perp.$ Note that $e^\perp$ is a degree-two edge in $\cT(\Gamma_\varphi).$ We claim that $\cT(\Gamma_\varphi) \mapsto \cT(\Lambda_\varphi)$ by a sparse edge collapse of $e^\perp.$ \

To see this, denote the tetrahedra containing $e^\perp$ by $\Delta_1$ and $\Delta_2.$ Then the edges of $\Delta_1$ and $\Delta_2$ opposite $e^\perp$ are of type (E2) and correspond to the endpoints of $e.$ In particular, they are distinct and of degree at least 3. In particular, $\Delta_1 \cup \Delta_2$ is incident with four pairwise distinct vertices and form an embedded 3--ball in the 3--sphere. Hence the conditions for a sparse edge collapse of $e^\perp$ are satisfied, and the resulting triangulation is precisely the triangulation $\cT(\Lambda_\varphi).$ Note that on the level of triangulations, two bipyramids that locally meet in two faces incident to $e^\perp$ are transformed into two bipyramids that locally meet only in two edges.

This is illustrated in \Cref{fig:edge_deletion}.

\begin{figure}[htbp]
\centering
\includegraphics[width=0.6\textwidth]{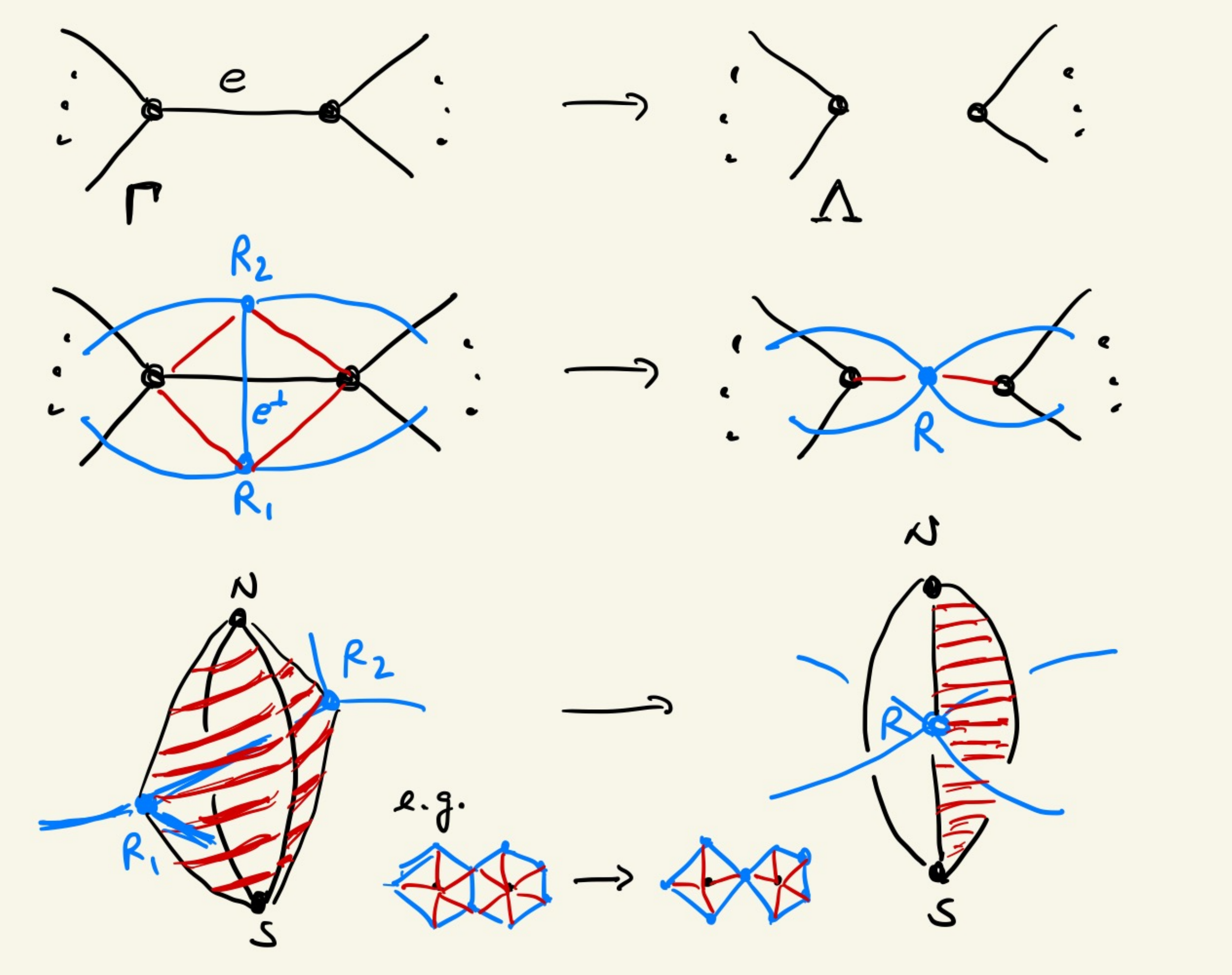}
\caption{Effect of removing a non-cut edge. From top to bottom, shown are portions of $\Gamma$, $\Gamma^\perp_\varphi$ and $\cT(\Gamma_\varphi)$ on the left and the corresponding portions of $\Lambda$, $\Lambda^\perp_\varphi$ and $\cT(\Lambda_\varphi)$ on the right. The effect on the bipyramids associated with the endpoints of the non-cut edge $e$ is that their intersection in $e^\perp$ is changed to an intersection in the vertex $R.$ 
} 
\label{fig:edge_deletion}
\end{figure}

%%%%%%%%%%%%%%%%%%%%%%%%%%%

\subsection{The reduction}
\label{subsec:reduction}

Let $\Gamma$ be a connected simplicial planar graph with at least two degree one vertices. A planar embedding $\varphi\co\Gamma \to S^2$ can be computed in polynomial time in terms of the size of $\Gamma$~\cite{Demoucron-graphes-1964}. Indeed, it can be computed in linear time in the number of vertices of $\Gamma$~\cite{Chiba-linear-1985}. Hence given, as input, a planar graph $\Gamma,$ as a first step in our construction we compute an embedding $\varphi\co \Gamma \to S^2$ using~\cite{Chiba-linear-1985}. The $2e(\Gamma)$ tetrahedra and $16 e(\Gamma)$ face pairings of the triangulation $\cT_1 = \cT(\Gamma_\varphi)$ can then be constructed in polynomial time in terms of $v(\Gamma)+e(\Gamma)$ from $\varphi.$ 

Also construct the triangulation $\cT_2 = \cT(L_{v(\Gamma)-1})$ from \Cref{subsec:examples}. Since 
\[
e(L_{v(\Gamma)-1}) = v(\Gamma)-1 = e(\Gamma) + 1 -r(\Gamma) \le e(\Gamma)
\]
it follows that $\cT_2$ can also be computed in polynomial time in terms of $v(\Gamma)+e(\Gamma)$ and that the number of tetrahedra in $\cT_1$ is greater or equal to the number of tetrahedra in $\cT_2.$

Let $\mathcal{C}_2$ be the set of sparse degree-two edge collapses or their inverses.

\begin{proposition}\label{claim:collapses_iff_hamiltonian}
There is a Hamiltonian path $C$ in $\Gamma$ if and only if the triangulations $\cT_1$ and $\cT_2$ are related by a sequence of $k=e(\Gamma)-v(\Gamma)+1$ moves in $\mathcal{C}_2$, and all $k$ moves are either sparse degree-two edge collapses or all $k$ moves are their inverses. 
\end{proposition}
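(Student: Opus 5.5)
For the forward direction, assume $\Gamma$ has a Hamiltonian path $C$. Since $\Gamma$ has at least two degree-one vertices, these must be the endpoints of $C$, so every other vertex has degree at least two along $C$. Let $E' = E(\Gamma)\setminus E(C)$; it has $e(\Gamma) - (v(\Gamma)-1) = k$ edges. I would delete the edges of $E'$ one at a time, each step using \Cref{subsec:deletion}, which realises the deletion as a sparse degree-two edge collapse $\cT(\Gamma_\varphi)\mapsto\cT(\Lambda_\varphi)$. After all $k$ deletions we reach $C_\varphi$, which is isotopic to an embedded $L_{v(\Gamma)-1}$, so the final triangulation is $\cT_2$.

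The step needing care is that each deleted edge must meet the hypotheses of \Cref{subsec:deletion}. Any remaining edge not in $C$ is non-cut, since $C$ stays inside the current graph and keeps it connected. The degree condition is the real issue: both endpoints must currently have degree at least three. An endpoint $x$ of an edge $f\in E'$ with $x$ an interior vertex of $C$ has two $C$-edges plus $f$, so degree at least three. But if $x$ is an endpoint of $C$, then $x$ has degree one in $\Gamma$, so no edge of $E'$ meets it. Thus every edge still in $E'$ has endpoints of degree at least three in the current graph, and the edges can be deleted in any order. Simpliciality is preserved because subgraphs of simplicial graphs are simplicial.

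For the converse, suppose there is a sequence of $k$ moves, all collapses or all inverses, from $\cT_1$ to $\cT_2$. I would first rule out the inverse case using the complexity pair. Each inverse move has effect $T_2$ and adds two tetrahedra. But $\cT_1$ has $2e(\Gamma)$ tetrahedra and $\cT_2$ has $2(v(\Gamma)-1)$, which is no more. So $k$ inverse moves are possible only if $k=0$. In that case $e(\Gamma)=v(\Gamma)-1$, so $\Gamma$ is a tree. A tree with $\cT(\Gamma)=\cT_2$ should be a path; to show this I would compare edge degrees, since by (E2) the degrees of the edges through $N$ and $S$ record the vertex degrees of $\Gamma$. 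A path is Hamiltonian.

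In the collapse case, the main obstacle is to show that every sparse degree-two edge collapse applied to some $\cT(\Lambda_\varphi)$ comes from a non-cut edge deletion as in \Cref{subsec:deletion}, so that the whole sequence stays within the family $\cT(\cdot)$. The only degree-two edges of $\cT(\Lambda_\varphi)$ are of type (E3), or of type (E2) for degree-two vertices of $\Lambda$. Edges of type (E1) have degree at least four. I would first try to show type (E2) edges are never sparse: the two tetrahedra around such an edge share both vertices $N$ and $S$, and the opposite edges $f_i$ are dual edges that must fail the embedded-ball or degree-at-least-three condition. For a type (E3) edge $e^\perp$, the opposite edges are the (E2) edges of the endpoints of $e$. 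Sparseness forces these endpoints to be distinct and of degree at least three, and the embedded-ball condition forces the two endpoints of $e^\perp$ to be distinct, so $e$ is non-cut. The result is then $\cT$ of $\Lambda$ with $e$ deleted. Iterating, after $k$ steps we reach $\cT(\Lambda'_\varphi)$ for a spanning subgraph $\Lambda'$ with $v(\Gamma)-1$ edges. This subgraph is connected, since only non-cut edges were removed, so it is a spanning tree. Identifying it with $\cT_2$ and using the (E2) degree data again forces every vertex degree to be at most two, so $\Lambda'$ is a path, which is a Hamiltonian path in $\Gamma$.
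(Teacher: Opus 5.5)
Your proposal is correct and follows essentially the same route as the paper. The forward direction deletes the edges of $\Gamma\setminus C$ one at a time via the non-cut edge deletion of \Cref{subsec:deletion}. The converse rules out (E1) edges by their degree and (E2) edges because their opposite edges are degree-two dual edges, so every collapse is such a deletion, ending at a spanning tree. The one real difference is the last step: you identify that tree as a path by comparing edge degrees with $\cT_2=\cT(L_{v(\Gamma)-1})$, rather than by the paper's leaf count, which has the small advantage of not requiring $\Gamma$ to have exactly two degree-one vertices.
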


\begin{proof}
Suppose there is a Hamiltonian path $C$ in $\Gamma.$ This necessarily starts and ends at the two degree one vertices. The number of edges in $C$ is $v(\Gamma)-1$. One can now iteratively delete the edges in $\Gamma \setminus C,$ giving a sequence of graphs 
\[
\Gamma \supset \Gamma_1 \supset \Gamma_2 \supset \ldots \supset \Gamma_{e(\Gamma)-v(\Gamma)+1} = C
\]
Note that an edge deleted from $\Gamma_i$ in this sequence is a non-cut edge that has no degree one or two vertices with respect to $\Gamma_i$ because $\Gamma_i$ contains the Hamiltonian path $C.$ Hence, as explained in \Cref{subsec:deletion}, one has an associated sequence of sparse degree-two edge collapses connecting the two triangulations (where we have suppressed the reference to the embedding $\varphi$, which is restricted to each subgraph):
\[
\cT(\Gamma) \mapsto \cT(\Gamma_1) \mapsto \cT(\Gamma_2) \mapsto \ldots \mapsto \cT(\Gamma_{e(\Gamma)-v(\Gamma)+1}) = \cT(C)
\]
For the converse, since a sparse degree-two edge collapse decreases the number of tetrahedra, we may suppose that we can obtain $\cT_2$ from $\cT_1$ by $k=e(\Gamma)-v(\Gamma)+1$ sparse degree-two edge collapses. If we can show that no sparse degree-two edge collapse involves an edge that is incident with $N$ or $S$, then we can relate the edge collapsing sequence to a sequence of deletions of edges of $\Gamma$ that result in $C.$

Note that each edge collapse identifies two vertices. The triangulation $\cT(\Gamma_\varphi)$ has $2+r(\Gamma)$ vertices and the triangulation $\cT(C)$ has three vertices, which we denote $N_C, S_C, R_C$ according to whether the vertex corresponds to $N$, $S$ or the single region of $C.$ There are $v(\Gamma)$ edges between $N_C$ and $S_C$, and each of $N_C$ and $S_C$ also has one edge connecting it to $R_C.$ The vertex $R_C$ in addition has $v(\Gamma)-1$ loops based at it. This is the complete set of edges in $\cT(C).$

We use analogous notation for the vertices of $\cT(\Gamma_\varphi).$ Note that there are also exactly $v(\Gamma)$ edges between $N_\Gamma$ and $S_\Gamma,$ and that this number does not change when edges in $\Gamma_\varphi^\perp$ are collapsed.

Suppose there is a sequence of sparse degree-two edge collapses
\[
\cT(\Gamma) \mapsto \cT_1 \mapsto \cT_2 \mapsto \ldots \mapsto \cT_{e(\Gamma)-v(\Gamma)+1} = \cT(C)
\]
If $\cT(\Gamma) \mapsto \cT_1$ is a sparse degree-two edge collapse of an edge $e^\perp$ in $\Gamma_\varphi^\perp$, then we may identify $\cT_1 = \cT(\Gamma_1)$ for the subgraph $\Gamma_1 \subset \Gamma$ obtained by deleting the edge $e$ of $\Gamma$ dual to $e^\perp.$ Since the degree-two edge is sparse, we also note each endpoint of $e$ has degree at least three and that $e$ is a non-cut edge.
This process can be continued inductively. If $\cT(\Gamma_i) = \cT_i \mapsto \cT_{i+1}$ is a sparse degree-two edge collapse of an edge $e_i^\perp$ in $\Gamma_i^\perp$, then we may identify $\cT_{i+1} = \cT(\Gamma_{i+1})$ for the subgraph $\Gamma_{i+1} \subset \Gamma_i$ obtained by deleting the edge $e_i$ of $\Gamma_i$ dual to $e_i^\perp.$ Moreover, each endpoint of $e_i$ has degree at least three and $e_i$ is a non-cut edge.

Suppose that this process continues until we arrive at $\cT(\Gamma_{e(\Gamma)-v(\Gamma)+1}) =  \cT(C).$ Hence deleting $e(\Gamma)-v(\Gamma)+1$ edges from $\Gamma$ gives the graph $\Gamma_{e(\Gamma)-v(\Gamma)+1} \subset \Gamma.$  We note that $\Gamma_{e(\Gamma)-v(\Gamma)+1}$ contains all vertices of $\Gamma,$ and hence $r(\Gamma_{e(\Gamma)-v(\Gamma)+1})=1$ by Euler's formula. Whence $\Gamma_{e(\Gamma)-v(\Gamma)+1}$ is a tree. 
Since each deleted edge has vertex degrees at least three and is a non-cut edge, $\Gamma_{e(\Gamma)-v(\Gamma)+1}$ has exactly two leaves. Hence $\Gamma_{e(\Gamma)-v(\Gamma)+1}$ is Hamiltonian path in $\Gamma.$ 

To complete the proof, suppose there is a triangulation $\cT_i = \cT(\Gamma_i)$ in which we perform a sparse degree-two edge collapse that is not of an edge in $\Gamma_i^\perp.$ Since each edge of type (E1) has degree at least four, this must be a collapse of a degree two edge $e$ between $N_\Gamma$ and $S_\Gamma.$ However the edges opposite $e$ in the two tetrahedra containing it have degree two since they are contained in $\Gamma_i^\perp$. But then $e$ is not a sparse degree-two edge. 
Hence there is no collapse of an edge between $N_\Gamma$ and $S_\Gamma.$ It follows that a sequence of $e(\Gamma)-v(\Gamma)+1$ sparse degree-two edge collapses between $\cT_1$ and $\cT_2$ detects a successive deletion of edges from $\Gamma$ that results in a Hamiltonian path in $\Gamma.$
\end{proof}

\begin{theorem} 
\label{thm:sparse_is_NP-complete}
\textsc{number of sparse degree-two edge collapses for 3-sphere} is NP-hard.
\end{theorem}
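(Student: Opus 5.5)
We prove \Cref{thm:sparse_is_NP-complete} by a Karp reduction from \textsc{modified planar hamiltonian path}, which is NP--complete by \cite[Theorem 15]{Koenig-NP-2021}. Given an instance $\Gamma$, first check in polynomial time whether $\Gamma$ is connected and simplicial. If it is not connected it has no Hamiltonian path, and we output a fixed no-instance. Multiple edges and loops can be deleted without affecting the existence of a Hamiltonian path. Also, any Hamiltonian path must start and end at degree-one vertices, so if $\Gamma$ has more than two such vertices we again output a fixed no-instance. So we may assume that $\Gamma$ is a connected simplicial planar graph with exactly two vertices of degree one, as in \Cref{subsec:reduction}.

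Next, compute the embedding $\varphi$ in linear time by \cite{Chiba-linear-1985}. Then build $\cT_1=\cT(\Gamma_\varphi)$ and $\cT_2=\cT(L_{v(\Gamma)-1})$, and set $k=e(\Gamma)-v(\Gamma)+1$. As noted in \Cref{subsec:reduction}, both triangulations have $O(e(\Gamma))$ tetrahedra and face pairings and are computable in time polynomial in $v(\Gamma)+e(\Gamma)$. The output instance of \textsc{number of sparse degree-two edge collapses for 3-sphere} is $(k,\cT_1,\cT_2)$.

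Correctness follows from \Cref{claim:collapses_iff_hamiltonian}, given the following count. Every sparse degree-two edge collapse changes the complexity pair by exactly $(-2,-1)$. The pair is $c(\cT_1)=(2e(\Gamma),\,2+r(\Gamma))$ and $c(\cT_2)=(2(v(\Gamma)-1),\,3)$. Using Euler's formula $r(\Gamma)-1=e(\Gamma)-v(\Gamma)+1=k$, any sequence of such collapses from $\cT_1$ to $\cT_2$ must have length exactly $k$. So "at most $k$ collapses" is equivalent to "exactly $k$ collapses", and \Cref{claim:collapses_iff_hamiltonian} says this holds iff $\Gamma$ has a Hamiltonian path.

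The main obstacle is matching the problem's phrasing to the proposition. The phrase "at most $k$" is handled by the invariance of the complexity pair just computed. The second point is that the proposition identifies $\cT_2$ with $\cT(C)$ for a Hamiltonian path $C$, whereas the instance uses $\cT(L_{v(\Gamma)-1})$. I would check that $\cT(C)$ depends only on the isotopy class of $C\subset S^2$, and that every embedding of a path is isotopic to the standard one, so $\cT(C)=\cT(L_{v(\Gamma)-1})$ as in \Cref{subsec:examples}. Since both the reduction and the equivalence hold, NP-hardness follows.
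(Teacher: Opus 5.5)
Your proposal is correct and follows the same route as the paper. The paper's proof simply notes that $\cT_1$ and $\cT_2$ are built in polynomial time and invokes \Cref{claim:collapses_iff_hamiltonian} to obtain the Karp reduction; your preprocessing (reducing to connected simplicial graphs with exactly two degree-one vertices) and your complexity-pair count (showing that any collapse sequence from $\cT_1$ to $\cT_2$ has length exactly $k$) make explicit two points the paper leaves implicit, namely the leaf count in the converse of the proposition and the match between ``at most $k$'' and ``exactly $k$''.
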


\begin{proof}
Given a planar graph $\Gamma,$ we have already argued that the triangulations $\cT_1$ and $\cT_2$ are constructed in time polynomial in $v(\Gamma)+e(\Gamma)$ Hence the reduction is polynomial. \Cref{claim:collapses_iff_hamiltonian} therefore implies we have a Karp reduction as stated at the beginning of \Cref{sec:karp_reduction}.
\end{proof}

The above problem (or the equivalent problem involving bistellar moves) is likely not in NP. An explicit exponential-type upper bound on the number of \emph{bistellar moves} needed to pass between two triangulations of the 3--sphere was obtained by Mijatovi\'{c}~\cite{Mijatovic-simplifying-2003}.

%%%%%%%%%%%%%%%%%%%%%%%%%%%
%%%%%%%%%%%%%%%%%%%%%%%%%%%

\begin{proof}[Proof of \Cref{thm:main}]
We are required to include the set $\mathcal{B}$ of all bistellar moves between triangulations in addition to sparse degree-two edge collapses and their inverses. Appealing to \Cref{claim:collapses_iff_hamiltonian} and \Cref{thm:sparse_is_NP-complete}, it suffices to show that a minimal sequence of elementary moves in $\mathcal{B}\cup \mathcal{C}_2$ required to pass from $\cT_1$ to $\cT_2$ consists only of sparse degree-two edge collapses. 

We know that we can pass from $\cT_1$ to $\cT_2$ with $k=e(\Gamma)-v(\Gamma)+1$ sparse degree-two edge collapses. To simplify notation, let $n = v(\Gamma)-1.$ Then $\cT_2 = \cT(L_n).$

Suppose there is a sequence of moves from $\mathcal{B}\cup \mathcal{C}_2$ that transforms $\cT_1$ into $\cT_2$ and which takes at most $k$ moves. Assume this sequence consists of $a$ 1--4 moves, $b$ 2--3 moves, $c$ 3--2 moves, $d$ 4--1 moves, $e$ sparse degree-two edge collapses and $f$ of their inverses. Then 
\begin{equation}\label{eq:weight}
a+b+c+d+e+f \le k
\end{equation}
The complexity pair of $\cT_2$ is
\[
c(\cT_2) = (2n, 3)
\]
Since $\cT_2$ is obtained from $\cT_1$ by $k$ sparse degree-two edge collapses, we have 
\[
c(\cT_1) = (2n+2k, 3+k)
\]

Hence 
\begin{align*}
(2n, 3)&= c(\cT_2)  \\
&= c(\cT_1)  + a (3,1) + b (1, 0) + c (-1,0)+ d(-3, -1) + e(-2, -1) + f (2, 1)\\
		&= (2n+2k, 3+k) + a (3,1) + b (1, 0) + c (-1,0)+ d(-3, -1) + e(-2, -1) + f (2, 1)
\end{align*}
This gives the equations
\begin{align*}
0 &= 3 a + b - c - 3 d - 2 e + 2 f + 2 k\\
0 &= a - d - e + f + k
\end{align*}
Solving the second for $e$ and substituting into the first gives:
\begin{align*}
e &= a - d + f + k\\
a+b &= c+d
\end{align*}
Also, substituting the expression for $e$ into \Cref{eq:weight} gives:
\begin{equation*}
2a+b+c+ 2f  \le 0
\end{equation*}
This forces $a=b=c=f=0$ since all terms are non-negative, and hence we obtain $d=0$ and $e=k.$ Hence even if one allows an arbitrary sequence of elementary moves from the larger set $\mathcal{B}\cup \mathcal{C}_2$, the minimal sequence remains a sequence of sparse degree-two edge crushes. 
\end{proof}

\begin{scholion}
The minimal number of moves in $\mathcal{B}$ required to pass from $\cT_1$ to $\cT_2$ is at least \[2k=2e(\Gamma)-2v(\Gamma)+2\]
\end{scholion}
\begin{proof}
Similar to above, using the smaller set of moves one obtains
\begin{align*}
(2n, 3)&= c(\cT_2)  \\
&= c(\cT_1)  + a (3,1) + b (1, 0) + c (-1,0)+ d(-3, -1) \\
		&= (2n+2k, 3+k) + a (3,1) + b (1, 0) + c (-1,0)+ d(-3, -1) 
\end{align*}
This simplifies to the equations
\begin{align*}
b &=  c  + k \ge k\\
d &= a  + k \ge k
\end{align*}
Hence a minimal sequence necessarily includes at least $k$ 2--3 moves and $k$ 4--1 moves. 
\end{proof}

%%%%%%%%%%%%%%%%%%%%%%%%%%%

\bibliographystyle{plain}
\bibliography{connectivity}

@unpublished{Rieck,
  author       = {Cheng, Shannon and Chlopecki, Anna and  Nazar, Saarah and  Samperton, Eric},
  title        = {An elementary proof that linking problems are hard},
  year         = {preprint},
  note         = {https://arxiv.org/abs/2509.13120}
}

@article {Mijatovic-simplifying-2003,
    AUTHOR = {Mijatovi\'{c}, Aleksandar},
     TITLE = {Simplifying triangulations of {$S^3$}},
   JOURNAL = {Pacific J. Math.},
  FJOURNAL = {Pacific Journal of Mathematics},
    VOLUME = {208},
      YEAR = {2003},
    NUMBER = {2},
     PAGES = {291--324},
      ISSN = {0030-8730,1945-5844},
   MRCLASS = {52B70 (57Q15)},
  MRNUMBER = {1971667},
       DOI = {10.2140/pjm.2003.208.291},
       URL = {https://doi.org/10.2140/pjm.2003.208.291},
}

@article {Joswig-computing-2006,
    AUTHOR = {Joswig, Michael and Pfetsch, Marc E.},
     TITLE = {Computing optimal {M}orse matchings},
   JOURNAL = {SIAM J. Discrete Math.},
  FJOURNAL = {SIAM Journal on Discrete Mathematics},
    VOLUME = {20},
      YEAR = {2006},
    NUMBER = {1},
     PAGES = {11--25},
      ISSN = {0895-4801,1095-7146},
   MRCLASS = {90C27 (57Q05 68Q17 90C57)},
  MRNUMBER = {2257241},
MRREVIEWER = {Winfried\ Hochst\"{a}ttler},
       DOI = {10.1137/S0895480104445885},
       URL = {https://doi.org/10.1137/S0895480104445885},
}

@article {Lackenby-recognising-2026,
    AUTHOR = {Lackenby, Marc and Schleimer, Saul},
     TITLE = {Recognising elliptic manifolds},
   JOURNAL = {Comment. Math. Helv.},
  FJOURNAL = {Commentarii Mathematici Helvetici. A Journal of the Swiss
              Mathematical Society},
    VOLUME = {101},
      YEAR = {2026},
    NUMBER = {1},
     PAGES = {1--45},
      ISSN = {0010-2571,1420-8946},
   MRCLASS = {57Q15 (57K30 57K35 57Z25 68Q25)},
  MRNUMBER = {5032283},
       DOI = {10.4171/cmh/597},
       URL = {https://doi.org/10.4171/cmh/597},
}

@article {Jackson-recognition-2025,
    AUTHOR = {Jackson, Adele},
     TITLE = {Recognition of {S}eifert fibered spaces with boundary is in
              {NP}},
   JOURNAL = {Math. Ann.},
  FJOURNAL = {Mathematische Annalen},
    VOLUME = {391},
      YEAR = {2025},
    NUMBER = {1},
     PAGES = {309--361},
      ISSN = {0025-5831,1432-1807},
   MRCLASS = {57K30 (57-08 57K35 57Q15 68Q25)},
  MRNUMBER = {4846785},
MRREVIEWER = {Jennifer\ Schultens},
       DOI = {10.1007/s00208-024-02920-x},
       URL = {https://doi.org/10.1007/s00208-024-02920-x},
}

@article {Chiba-linear-1985,
    AUTHOR = {Chiba, Norishige and Nishizeki, Takao and Abe, Shigenobu and
              Ozawa, Takao},
     TITLE = {A linear algorithm for embedding planar graphs using
              {$PQ$}-trees},
   JOURNAL = {J. Comput. System Sci.},
  FJOURNAL = {Journal of Computer and System Sciences},
    VOLUME = {30},
      YEAR = {1985},
    NUMBER = {1},
     PAGES = {54--76},
      ISSN = {0022-0000},
   MRCLASS = {05C10 (05C05 68Q20)},
  MRNUMBER = {788831},
MRREVIEWER = {R.\ C.\ Read},
       DOI = {10.1016/0022-0000(85)90004-2},
       URL = {https://doi.org/10.1016/0022-0000(85)90004-2},
}

@unpublished{HT,
  author       = {Touseef, Haider and Tsvietkova, Anastasiia},
  title        = {Polynomial algorithm for alternating link equivalence},
  year         = {preprint},
  note         = {https://arxiv.org/abs/2412.02003}
}

@article {Ivanov,
    AUTHOR = {Ivanov, S. V.},
     TITLE = {The computational complexity of basic decision problems in
              3-dimensional topology},
   JOURNAL = {Geom. Dedicata},
  FJOURNAL = {Geometriae Dedicata},
    VOLUME = {131},
      YEAR = {2008},
     PAGES = {1--26},
      ISSN = {0046-5755,1572-9168},
   MRCLASS = {57M40 (57M05 57M50 57N10 68Q25)},
  MRNUMBER = {2369189},
MRREVIEWER = {Wolfgang\ H.\ Heil},
       DOI = {10.1007/s10711-007-9210-4},
       URL = {https://doi-org.proxy.libraries.rutgers.edu/10.1007/s10711-007-9210-4},
}

@incollection {Schleimer,
    AUTHOR = {Schleimer, Saul},
     TITLE = {Sphere recognition lies in {NP}},
 BOOKTITLE = {Low-dimensional and symplectic topology},
    SERIES = {Proc. Sympos. Pure Math.},
    VOLUME = {82},
     PAGES = {183--213},
 PUBLISHER = {Amer. Math. Soc., Providence, RI},
      YEAR = {2011},
      ISBN = {978-0-8218-5235-4},
   MRCLASS = {57M40 (57-04)},
  MRNUMBER = {2768660},
MRREVIEWER = {Bruno\ P.\ Zimmermann},
       DOI = {10.1090/pspum/082/2768660},
       URL = {https://doi-org.proxy.libraries.rutgers.edu/10.1090/pspum/082/2768660},
}

@book {K3,
    AUTHOR = {Baykur, R. İnanç and Kirby, Robion C. and  Ruberman, Daniel},
     TITLE = {K3: A New Problem List in Low-Dimensional Topology},
    SERIES = {in press},
      NOTE = {},
 PUBLISHER = {American Mathematical Society (AMS)},
      YEAR = {2026},
     PAGES = {},
      ISBN = {},
   MRCLASS = {},
  MRNUMBER = {},
MRREVIEWER = {},
       DOI = {},
       URL = {},
}

@inproceedings {Babai,
    AUTHOR = {Babai, L\'aszl\'o},
     TITLE = {Graph isomorphism in quasipolynomial time [extended abstract]},
 BOOKTITLE = {S{TOC}'16---{P}roceedings of the 48th {A}nnual {ACM} {SIGACT}
              {S}ymposium on {T}heory of {C}omputing},
     PAGES = {684--697},
 PUBLISHER = {ACM, New York},
      YEAR = {2016},
      ISBN = {978-1-4503-4132-5},
   MRCLASS = {68R10 (05C60 05C85 20B05 68Q25)},
  MRNUMBER = {3536606},
       DOI = {10.1145/2897518.2897542},
       URL = {https://doi.org/10.1145/2897518.2897542},
}

@article {ScottShort,
    AUTHOR = {Scott, Peter and Short, Hamish},
     TITLE = {The homeomorphism problem for closed 3-manifolds},
   JOURNAL = {Algebr. Geom. Topol.},
  FJOURNAL = {Algebraic \& Geometric Topology},
    VOLUME = {14},
      YEAR = {2014},
    NUMBER = {4},
     PAGES = {2431--2444},
      ISSN = {1472-2747,1472-2739},
   MRCLASS = {57M50 (20F65)},
  MRNUMBER = {3331689},
MRREVIEWER = {Rafael\ Oswaldo\ Ruggiero},
       DOI = {10.2140/agt.2014.14.2431},
       URL = {https://doi.org/10.2140/agt.2014.14.2431},
}

@article {Kuperberg,
    AUTHOR = {Kuperberg, Greg},
     TITLE = {Algorithmic homeomorphism of 3-manifolds as a corollary of
              geometrization},
   JOURNAL = {Pacific J. Math.},
  FJOURNAL = {Pacific Journal of Mathematics},
    VOLUME = {301},
      YEAR = {2019},
    NUMBER = {1},
     PAGES = {189--241},
      ISSN = {0030-8730,1945-5844},
   MRCLASS = {57M50 (57K30 68Q15)},
  MRNUMBER = {4007377},
MRREVIEWER = {Jessica\ S.\ Purcell},
       DOI = {10.2140/pjm.2019.301.189},
       URL = {https://doi.org/10.2140/pjm.2019.301.189},
}

@inproceedings {BurtonSpreer,
    AUTHOR = {Burton, Benjamin A. and Spreer, Jonathan},
     TITLE = {The complexity of detecting taut angle structures on
              triangulations},
 BOOKTITLE = {Proceedings of the {T}wenty-{F}ourth {A}nnual {ACM}-{SIAM}
              {S}ymposium on {D}iscrete {A}lgorithms},
     PAGES = {168--183},
 PUBLISHER = {SIAM, Philadelphia, PA},
      YEAR = {2012},
      ISBN = {978-1-611972-52-8},
   MRCLASS = {68U05 (57M50 68Q25)},
  MRNUMBER = {3185388},
}

@article {deMesmayRieckTancer,
    AUTHOR = {de Mesmay, Arnaud and Rieck, Yo'av and Sedgwick, Eric and
              Tancer, Martin},
     TITLE = {Embeddability in {$\Bbb R^3$} is {NP}-hard},
   JOURNAL = {J. ACM},
  FJOURNAL = {Journal of the ACM},
    VOLUME = {67},
      YEAR = {2020},
    NUMBER = {4},
     PAGES = {Art. 20, 29},
      ISSN = {0004-5411,1557-735X},
   MRCLASS = {57R40 (68Q17 68U05)},
  MRNUMBER = {4116340},
MRREVIEWER = {Sang-Eon\ Han},
       DOI = {10.1145/3396593},
       URL = {https://doi.org/10.1145/3396593},
}

@article {BurtondeMesmayWagner,
    AUTHOR = {Burton, Benjamin A. and de Mesmay, Arnaud and Wagner, Uli},
     TITLE = {Finding non-orientable surfaces in 3-manifolds},
   JOURNAL = {Discrete Comput. Geom.},
  FJOURNAL = {Discrete \& Computational Geometry. An International Journal
              of Mathematics and Computer Science},
    VOLUME = {58},
      YEAR = {2017},
    NUMBER = {4},
     PAGES = {871--888},
      ISSN = {0179-5376,1432-0444},
   MRCLASS = {68U05 (57M50 68Q25)},
  MRNUMBER = {3717241},
MRREVIEWER = {Hidetoshi\ Masai},
       DOI = {10.1007/s00454-017-9900-0},
       URL = {https://doi.org/10.1007/s00454-017-9900-0},
}

@inproceedings {KoenigTsvietkova1,
    AUTHOR = {Koenig, Dale and Tsvietkova, Anastasiia},
     TITLE = {Unlinking, splitting, and some other {NP}-hard problems in
              knot theory},
 BOOKTITLE = {Proceedings of the 2021 {ACM}-{SIAM} {S}ymposium on {D}iscrete
              {A}lgorithms ({SODA})},
     PAGES = {1496--1507},
 PUBLISHER = {[Society for Industrial and Applied Mathematics (SIAM)],
              Philadelphia, PA},
      YEAR = {2021},
      ISBN = {978-1-61197-646-5},
   MRCLASS = {68Q17 (57K10)},
  MRNUMBER = {4262523},
       DOI = {10.1137/1.9781611976465.90},
       URL = {https://doi.org/10.1137/1.9781611976465.90},
}

@article {AgolHassThurston,
    AUTHOR = {Agol, Ian and Hass, Joel and Thurston, William},
     TITLE = {The computational complexity of knot genus and spanning area},
   JOURNAL = {Trans. Amer. Math. Soc.},
  FJOURNAL = {Transactions of the American Mathematical Society},
    VOLUME = {358},
      YEAR = {2006},
    NUMBER = {9},
     PAGES = {3821--3850},
      ISSN = {0002-9947,1088-6850},
   MRCLASS = {68Q17 (11Y16 57M25 57M50)},
  MRNUMBER = {2219001},
MRREVIEWER = {Mario\ Eudave Mu\~noz},
       DOI = {10.1090/S0002-9947-05-03919-X},
       URL = {https://doi.org/10.1090/S0002-9947-05-03919-X},
}

@incollection {LackenbyKnotTheory,
    AUTHOR = {Lackenby, Marc},
     TITLE = {Elementary knot theory},
 BOOKTITLE = {Lectures on geometry},
    SERIES = {Clay Lect. Notes},
     PAGES = {29--64},
 PUBLISHER = {Oxford Univ. Press, Oxford},
      YEAR = {2017},
      ISBN = {978-0-19-878491-3},
   MRCLASS = {57M25},
  MRNUMBER = {3676592},
MRREVIEWER = {Colin\ C.\ Adams},
}

@article {LackenbyNPhard,
    AUTHOR = {Lackenby, Marc},
     TITLE = {Some conditionally hard problems on links and 3-manifolds},
   JOURNAL = {Discrete Comput. Geom.},
  FJOURNAL = {Discrete \& Computational Geometry. An International Journal
              of Mathematics and Computer Science},
    VOLUME = {58},
      YEAR = {2017},
    NUMBER = {3},
     PAGES = {580--595},
      ISSN = {0179-5376,1432-0444},
   MRCLASS = {57M25 (57N10 68Q17 68Q25)},
  MRNUMBER = {3690662},
MRREVIEWER = {Jonathan\ Spreer},
       DOI = {10.1007/s00454-017-9905-8},
       URL = {https://doi.org/10.1007/s00454-017-9905-8},
}

@article {deMesmayRieckSedgwickTancer,
    AUTHOR = {de Mesmay, Arnaud and Rieck, Yo'av and Sedgwick, Eric and
              Tancer, Martin},
     TITLE = {The unbearable hardness of unknotting},
   JOURNAL = {Adv. Math.},
  FJOURNAL = {Advances in Mathematics},
    VOLUME = {381},
      YEAR = {2021},
     PAGES = {Paper No. 107648, 36},
      ISSN = {0001-8708,1090-2082},
   MRCLASS = {57K10 (68Q17)},
  MRNUMBER = {4215750},
MRREVIEWER = {Mikhail\ V.\ Volkov},
       DOI = {10.1016/j.aim.2021.107648},
       URL = {https://doi.org/10.1016/j.aim.2021.107648},
}

@article {deMesmaySchaeferSedgwick,
    AUTHOR = {de Mesmay, Arnaud and Schaefer, Marcus and Sedgwick, Eric},
     TITLE = {Link crossing number is {NP}-hard},
   JOURNAL = {J. Knot Theory Ramifications},
  FJOURNAL = {Journal of Knot Theory and its Ramifications},
    VOLUME = {29},
      YEAR = {2020},
    NUMBER = {6},
     PAGES = {2050043, 15},
      ISSN = {0218-2165,1793-6527},
   MRCLASS = {57K10 (57M15 68Q17 68R10)},
  MRNUMBER = {4125179},
MRREVIEWER = {Davide\ Bil\`o},
       DOI = {10.1142/S0218216520500431},
       URL = {https://doi.org/10.1142/S0218216520500431},
}

@misc{regina,
    author = {Benjamin A. Burton and Ryan Budney and William Pettersson and others},
    title = {Regina: Software for low-dimensional topology},
    howpublished = {{\tt http://\allowbreak regina-normal.\allowbreak github.\allowbreak io/}},
    year = {1999--2026}}

@incollection {Lackenby-algorithms-2022,
    AUTHOR = {Lackenby, Marc},
     TITLE = {Algorithms in 3-manifold theory},
 BOOKTITLE = {Surveys in differential geometry 2020. {S}urveys in 3-manifold
              topology and geometry},
    SERIES = {Surv. Differ. Geom.},
    VOLUME = {25},
     PAGES = {163--213},
 PUBLISHER = {Int. Press, Boston, MA},
      YEAR = {[2022] \copyright 2022},
      ISBN = {978-1-57146-419-4},
   MRCLASS = {57K10 (57M50 68Q25 68W40)},
  MRNUMBER = {4479752},
}

@article {Koenig-NP-2021,
    AUTHOR = {Koenig, Dale and Tsvietkova, Anastasiia},
     TITLE = {N{P}-hard problems naturally arising in knot theory},
   JOURNAL = {Trans. Amer. Math. Soc. Ser. B},
  FJOURNAL = {Transactions of the American Mathematical Society. Series B},
    VOLUME = {8},
      YEAR = {2021},
     PAGES = {420--441},
      ISSN = {2330-0000},
   MRCLASS = {57K10 (57-08)},
  MRNUMBER = {4273193},
MRREVIEWER = {Colin\ C.\ Adams},
       DOI = {10.1090/btran/71},
       URL = {https://doi.org/10.1090/btran/71},
}

@article {Garey-planar-1976,
    AUTHOR = {Garey, M. R. and Johnson, D. S. and Tarjan, R. Endre},
     TITLE = {The planar {H}amiltonian circuit problem is {NP}-complete},
   JOURNAL = {SIAM J. Comput.},
  FJOURNAL = {SIAM Journal on Computing},
    VOLUME = {5},
      YEAR = {1976},
    NUMBER = {4},
     PAGES = {704--714},
      ISSN = {0097-5397},
   MRCLASS = {05C35 (02G05)},
  MRNUMBER = {444516},
MRREVIEWER = {Andreas\ Blass},
       DOI = {10.1137/0205049},
       URL = {https://doi.org/10.1137/0205049},
}

@article {Demoucron-graphes-1964,
    AUTHOR = {Demoucron, G. and Malgrange, Y. and Pertuiset, R.},
     TITLE = {Graphes Planaires: Reconnaissance et Construction de
Representations Planaires Topologiques},
   JOURNAL = {Rev. Franc. Rech. Oper.},
  FJOURNAL = {Rev. Franc. Rech. Oper.},
      YEAR = {1964},
    NUMBER = {8},
     PAGES = {33--34},
}

@incollection {Burton-computational-2013,
    AUTHOR = {Burton, Benjamin A.},
     TITLE = {Computational topology with {R}egina: algorithms, heuristics
              and implementations},
 BOOKTITLE = {Geometry and topology down under},
    SERIES = {Contemp. Math.},
    VOLUME = {597},
     PAGES = {195--224},
 PUBLISHER = {Amer. Math. Soc., Providence, RI},
      YEAR = {2013},
      ISBN = {978-0-8218-8480-5},
   MRCLASS = {57-04 (57N10 57Q15)},
  MRNUMBER = {3186674},
       DOI = {10.1090/conm/597/11877},
       URL = {https://doi.org/10.1090/conm/597/11877},
}

@article {Rubinstein-traversing-2019,
    AUTHOR = {Rubinstein, J. Hyam and Segerman, Henry and Tillmann, Stephan},
     TITLE = {Traversing three-manifold triangulations and spines},
   JOURNAL = {Enseign. Math.},
  FJOURNAL = {L'Enseignement Math\'{e}matique},
    VOLUME = {65},
      YEAR = {2019},
    NUMBER = {1-2},
     PAGES = {155--206},
      ISSN = {0013-8584,2309-4672},
   MRCLASS = {57Q15 (57K30 57Q25)},
  MRNUMBER = {4057358},
MRREVIEWER = {J.\ P. E. Hodgson},
       DOI = {10.4171/lem/65-1/2-5},
       URL = {https://doi.org/10.4171/lem/65-1/2-5},
}

@article{Newman-foundation-1926,
	Author = {Newman, M. H. A.},
	Fjournal = {Proceedings of the Royal Academy of Sciences at Amsterdam},
	Journal = {Proc. Royal Acad. Amsterdam},
	Pages = {610--641},
	Title = {On the foundations of combinatorial Analysis Situs},
	Volume = {29},
	Year = {1926}}

@article{Alexander-combinatorial-1930,
	Author = {Alexander, James W.},
	Doi = {10.2307/1968099},
	Fjournal = {Annals of Mathematics. Second Series},
	Issn = {0003-486X},
	Journal = {Ann. of Math. (2)},
	Mrclass = {DML},
	Mrnumber = {1502943},
	Number = {2},
	Pages = {292--320},
	Title = {The combinatorial theory of complexes},
	Url = {https://doi.org/10.2307/1968099},
	Volume = {31},
	Year = {1930}}

@article{Pachner-bistellare-1978,
	Author = {Pachner, Udo},
	Doi = {10.1007/BF01226024},
	Fjournal = {Archiv der Mathematik},
	Issn = {0003-889X},
	Journal = {Arch. Math. (Basel)},
	Mrclass = {57C05 (52A25)},
	Mrnumber = {0488070},
	Mrreviewer = {Gotz Brunner},
	Number = {1},
	Pages = {89--98},
	Title = {Bistellare \"{A}quivalenz kombinatorischer {M}annigfaltigkeiten},
	Url = {https://doi.org/10.1007/BF01226024},
	Volume = {30},
	Year = {1978}}

@article{Moise-affine-1952,
	Author = {Moise, Edwin E.},
	Doi = {10.2307/1969769},
	Fjournal = {Annals of Mathematics. Second Series},
	Issn = {0003-486X},
	Journal = {Ann. of Math. (2)},
	Mrclass = {56.0X},
	Mrnumber = {0048805},
	Mrreviewer = {S. S. Cairns},
	Pages = {96--114},
	Title = {Affine structures in {$3$}-manifolds. {V}. {T}he triangulation theorem and {H}auptvermutung},
	Url = {https://doi.org/10.2307/1969769},
	Volume = {56},
	Year = {1952}}

@book{matveev_book,
	Author = {Matveev, Sergei V.},
	Edition = {Second},
	Publisher = {Springer},
	Title = {Algorithmic Topology and Classification of 3-Manifolds},
	Year = {2007}}

\address{Stephan Tillmann\\School of Mathematics and Statistics F07, The University of Sydney, NSW 2006 Australia\\{stephan.tillmann@sydney.edu.au\\-----}}

\address{Anastasiia Tsvietkova\\Department of Mathematics and Computer Science, Rutgers University-Newark, Newark, NJ 07102, USA\\{a.tsviet@rutgers.edu}}

\Addresses
                                                      
\end{document}